\documentclass[11pt,reqno]{amsart}
\usepackage{graphicx,amssymb,mathrsfs,amsmath,amsfonts,appendix}
\usepackage{subfigure}
\usepackage{amsthm}
\usepackage{paralist}
\usepackage{enumitem}
\usepackage[utf8]{inputenc}
\usepackage{comment}
\usepackage{amsmath}
\usepackage{mathtools}
\usepackage{esint}
\usepackage[colorlinks=true, pdfstartview=FitV, linkcolor=blue, citecolor=blue, urlcolor=blue]{hyperref}
\numberwithin{equation}{section}

\newtheorem{lemma}{Lemma}
\newtheorem{theorem}{Theorem}[section]
\newtheorem{definition}{Definition}
\newtheorem{remark}{Remark}[section]

\usepackage[colorinlistoftodos]{todonotes}

\newcommand{\xd}{\textrm{d}}
\newcommand{\bB}[0]{{\bf B}}

\newcommand{\divergence}[0]{\operatorname{div}}
\newcommand{\ep}[0]{\varepsilon}

\setlength{\topmargin}{-0.5in}
\setlength{\oddsidemargin}{0.3in}
\setlength{\evensidemargin}{0.3in}
\setlength{\textheight}{8.5in}
\setlength{\textwidth}{6.0in}
\setlength{\footskip}{0.7in}
\setlength{\parindent}{0em}

\title[Degenerate nonlocal CH equations]{Degenerate nonlocal Cahn-Hilliard equations:
well-posedness, regularity and 
local asymptotics}
\author[Elisa Davoli]{Elisa Davoli}
\address{Institut f\"ur Mathematik, University of Vienna, Oskar-Morgenstern-Platz 1, 1090 Vienna, Austria}
\email{elisa.davoli@univie.ac.at}
\author[Helene Ranetbauer]{Helene Ranetbauer}
\address{Institut f\"ur Mathematik, University of Vienna, Oskar-Morgenstern-Platz 1, 1090 Vienna, Austria}
\email{helene.ranetbauer@univie.ac.at}
\author[Luca Scarpa]{Luca Scarpa}
\address{Institut f\"ur Mathematik, University of Vienna, Oskar-Morgenstern-Platz 1, 1090 Vienna, Austria}
\email{luca.scarpa@univie.ac.at}
\author[Lara Trussardi]{Lara Trussardi}
\address{Institut f\"ur Mathematik, University of Vienna, Oskar-Morgenstern-Platz 1, 1090 Vienna, Austria}
\email{lara.trussardi@univie.ac.at}

\keywords{Nonlocal Cahn-Hilliard equation, degenerate potential, singular kernel, regularity, well-posedness, nonlocal-to-local convergence, convection}
\subjclass[2010]{45K05, 35K25, 35K55, 35B40, 76R05}
\begin{document}

\begin{abstract}
Existence and uniqueness of solutions for nonlocal Cahn-Hilliard equations with degenerate potential is shown. The nonlocality is described by means of a symmetric singular kernel not falling within the framework of any previous existence theory. A convection term is also taken into account. Building upon this novel existence result, we prove convergence of solutions for this class of nonlocal Cahn-Hilliard equations to their local counterparts, as the nonlocal convolution kernels approximate a Dirac delta. Eventually, we show that, under suitable assumptions on the data, the solutions to the nonlocal Cahn-Hilliard equations exhibit further regularity, and the nonlocal-to-local convergence is verified in a stronger topology. 
\end{abstract}

\maketitle

\tableofcontents

\section{Introduction}
\label{sec:intro}

The Cahn-Hilliard equation was originally introduced in \cite{CH} in order to model the so-called ``spinodal decomposition'' phenomenon
occurring during the phase separation processes in binary metallic alloys. Since then it has acquired fundamental importance in several diffuse-interface models
in different fields, ranging from physics and engineering to biology.

This nonlinear parabolic PDE exhibits a gradient-flow structure (in the $H^{-1}$-metric) in terms of the free energy functional given by, cf.~\cite{CH},
\begin{equation}
\label{eq:enfunCH}
E_{CH}(u)=\int_{\Omega}\Bigl(\frac{\tau^{2}}{2}|\nabla u(x)|^{2}+F(u(x))\Bigr) \,\xd x,%
\end{equation}
where $\Omega$ is the $d$-dimensional flat torus, $F$ is a double-well potential, and $\tau$ is a small positive parameter related to the thickness of the transition region. The choice of the set $\Omega$ is classical in the literature, and corresponds to imposing periodic boundary conditions.
The corresponding evolution problem reads as follows
\begin{align}\label{eq:CH}
\begin{aligned}
\partial_t u+{\rm div}\, J_{CH} & =0\text{,}\\ 
J_{CH} & =-m(u)\nabla \mu_{CH}\text{,}\\
\mu_{CH} & =\frac{\delta E_{CH}(u)}{\delta u}=-\tau^{2}\Delta u+F^{\prime}(u)\text{,}
\end{aligned}
\end{align}
where $\mu_{CH}$ is the chemical potential associated to the energy $E_{CH}$, and the symbol ${\rm div}(\cdot)$ denotes the divergence operator. The function $m(\cdot)$ in~\eqref{eq:CH} is known as mobility. \\

The mathematical literature on the classical Cahn-Hilliard
equation has been widely developed in the last decades,
in terms of well-posedness of the system with possibly 
degenerate potentials, viscosity terms and dynamic boundary conditions, but also in the direction of regularity, long-time behaviour of solutions, and optimal control problems. Among the extensive literature, we mention
the works \cite{cher-gat-mir, cher-mir-zel, cher-pet, colli-fuk-CHmass,col-fuk-eqCH, col-gil-spr, 
gil-mir-sch} dealing with existence-uniqueness of solutions, \cite{col-fuk-diffusion, col-scar, gil-mir-sch-longtime} for studies on 
the asymptotic behaviour of solutions, and
\cite{bcst1, mir-sch, scar-VCHDBC} for analyses of
the system incorporating possibly nonlinear viscosity terms.
As far as optimal control problems are concerned,
we point out the contributions
\cite{col-far-hass-gil-spr, col-gil-spr-contr, col-gil-spr-contr2, CS17, hinter-weg}.\\

In the early $90$'s in~\cite{GL} G. Giacomin and J. Lebowitz considered the hydrodynamic limit of a microscopic model describing a $d$-dimensional lattice gas evolving via a Poisson nearest-neighbor process. In this seminal paper, the authors rigorously derived a nonlocal energy functional of the form
\begin{align} \label{eq:enfunGL}
E_{NL}(u)&=\frac{1}{4}\int_{\Omega}\int_{\Omega}K(x,y)(u(x)-u(y))^{2}\mathrm{d}
x\xd y+ \int_{\Omega}F(u(x))\xd x,
\end{align}
where $K(x,y)$ is a positive and symmetric convolution kernel, and proposed the corresponding gradient flow as a model for binary alloys undergoing phase change.

The associated evolution problem, providing a nonlocal variant of the Cahn-Hilliard PDE, is given by the following system of equations:
\begin{align}\label{eq:NLCH}
\begin{aligned}
\partial_t u+{\rm div}\, J_{NL} & =0\text{,}\\ 
J_{NL} & =-m(u)\nabla \mu_{NL}\text{,}\\
\mu_{NL} & =\frac{\delta E_{NL}(u)}{\delta u}=(K*1) u-K* u+ F'(u) \text{,}
\end{aligned}
\end{align}
where $(K*1)(x):=\int_{\Omega}K(x,y)\xd y$ and $(K*u)(x):=\int_\Omega K(x,y)u(y)\, \xd y$, for $x\in \Omega$.\\

The study of such nonlocal Cahn-Hilliard equations has recently been the subject of an intense research activity (see, e.g.
\cite{ab-bos-grass-NLCH,bat-han-NLCH,gal-gior-grass-NLCH,gal-grass-NLCH,han-NLCH} and the references therein).  All the available results in the literature dealing with nonlocal evolution of phase interfaces require the 
kernel $K$ to be symmetric and of class $W^{1,1}$.  Such requirements are usually met by checking a condition in the following form 
\begin{equation}
    \label{eq:cond-K}
    |K(x,y)|\leq C|x-y|^{-\alpha}\quad\text{with}\quad0<\alpha<\frac32
\end{equation}
(see \cite[Remark 1]{col-gil-spr-OCNLphase}).\\

The interest in this nonlocal model is motivated by its atomistic justification and its generality. A further motivation for the study of  models in the form \eqref{eq:NLCH} is the observation that, at least formally, when the interaction kernel $K$ is of the form $K(x,y)=K(|x-y|)$ and concentrates around the origin, then the behavior of the nonlocal interface evolution problems approaches that of the standard local Cahn-Hilliard equation.

This formal argument is enforced by the rigorous theory involving the variational convergence of nonlocal energies of the form \eqref{eq:enfunGL} to local integral functionals as in \eqref{eq:enfunCH}. Building upon the seminal papers by J.~Bourgain, H.~Brezis, and P.~Mironescu \cite{BBM, BBM2}, and of V. Mazy'a and T. Shaposhnikova \cite{MS, MS2}, a whole nonlocal-to-local framework has been developed for singular nonlocal kernels associated to fractional Sobolev spaces. This study has been complemented by the $\Gamma$-convergence analysis and Poincar\'e inequalities obtained by A.~C.~Ponce in \cite{ponce04, ponce}. More specifically, considering the following family of convolution kernels, identified by a small positive parameter $\varepsilon$,
\begin{equation}\label{eq:kernel}
K_\varepsilon(x,y)=\frac{\rho_\varepsilon(|x-y|)}{|x-y|^2},
\end{equation}
where $(\rho_{\ep})_{\ep}$ is a suitable sequence of mollifiers, A.~C.~Ponce showed the variational convergence
\[
\frac{1}{4}\int_{\Omega}\int_{\Omega}K_{\ep}(x,y)(u(x)-u(y))^{2}\mathrm{d}
x\xd y\to \frac12\int_{\Omega}|\nabla u(x)|^2\,\xd x.
\] 

\bigskip

The first positive result towards rendering the formal nonlocal-to-local convergence of the Cahn-Hilliard models rigorously has been achieved in~\cite{MRT18}, where the authors have focused on convergence of weak solutions of the nonlocal Cahn-Hilliard equation~\eqref{eq:NLCH} to weak solutions of its local counterpart~\eqref{eq:CH}, as the convolution kernel $K$ approximates a Dirac delta centered in the origin. In the aforementioned paper, the convergence is studied in the case of constant mobility, with a non-singular double-well potential satisfying a bounded-concavity assumption of the form
$$F''\geq -B_1,$$
for a positive constant $B_1$ small enough, (see \cite[Assumption \textbf{H3}]{MRT18}).\\

Due to the above-mentioned variational convergence result, kernels in the form \eqref{eq:kernel} are the most natural choice in the study of nonlocal phase transition problems. However, in general it is not true that these kernels enjoy a $W^{1,1}$ regularity, so that the available existence results in the literature do \emph{not} apply. In addition, the usual condition 
\eqref{eq:cond-K} is \emph{not} satisfied by $K_{\ep}$ as in \eqref{eq:kernel}. This observation renders the analysis of this class of problems very delicate and several nontrivial difficulties arise. For example, the definition and regularity of the chemical potential $\mu_{NL}$ in \eqref{eq:NLCH} relies on the properties of the linear unbounded operator
$({\bB}, D({\bf B}))$, defined as
\begin{gather*}
  D(\bB):=\{v \in L^2(\Omega):
  (K*1)v - (K*v)\in L^2(\Omega)\}\,,
  \\
  \bB(v):=(K*1)v - (K*v)\,,
  \quad \forall v\in D(\bB)\,,
\end{gather*}
whose domain $D(\bB)$ is, a priori, not explicitly characterizable and not even necessarily containing $H^1(\Omega)$ (see Subsection \ref{sec:sing-kernel}). Such endeavours are further enhanced when turning to the analysis of nonlocal diffusions driven by degenerate potentials.\\

The first contribution of this paper (see Theorem \ref{th1}) is the development of a well-posedness theory for nonlocal Cahn-Hilliard equations having singular kernels $K_\varepsilon$ (for $\ep>0$ being fixed) defined as in \eqref{eq:kernel}.\\

In our analysis, we remove the small-concavity assumption on the potential that was required in \cite{MRT18}, and include  possibly degenerate double-well potentials $F$ defined on bounded domains.
Indeed, while the classical choice for $F$ is the fourth-order polynomial 
$F_{\rm pol}(r):=\frac14(r^2-1)^2$, $r\in\mathbb{R}$, with 
minima in $\pm1$ (corresponding to the pure phases), it is well-known that, in view of the physical 
interpretation of the model, a more realistic description is given by the logarithmic 
double-well potential
$$F_{\log}(s)=\frac{\theta}{2}((1+s)\log(1+s)+(1-s)\log(1-s))+\frac{\theta_c}{2}-c s^2$$
for $0<\theta<\theta_c$ and $c>0$, which by contrast is defined on
the bounded domain $(-1,1)$ and possesses minima within the open interval $(-1,1)$. Another interesting 
example of $F$ which is included in our treatment 
is the so-called double-obstacle potential (see \cite{BE, OoP}), having the form
$$
F_{\rm ob}(s)=I_{[-1,1]}(s)+\frac12 (1-s^2),\quad 
I_{[-1,1]}(s):=
\begin{cases}
0&\text{if }s\in [-1,1]\\+\infty&\text{otherwise}\,.
\end{cases}
$$
In this latter case, the derivative $F_{\rm ob}'$ is not defined 
in the usual way, and
has to be interpreted 
as the subdifferential $\partial F_{\rm ob}$ in the sense of convex analysis (see \cite{barbu-monot}). Analogously the equations defining the chemical potential must be read as a differential inclusion instead.\\

A further extension provided by our work is to consider a nonlocal Cahn-Hilliard equation augmented by a convection term in divergence form, i.e.
\begin{align}\label{eq:NLCH-ad}
\begin{aligned}
\partial_t u+{\rm div}\, J_{NL}+ {\mathrm{div}}(\beta u)& =0\text{,}\\ J_{NL} & =-\nabla \mu_{NL}\text{,}\\
\mu_{NL} & =\frac{\delta E_{NL}(u)}{\delta u}=(K*1) u-K* u+ F'(u) \text{.}
\end{aligned}
\end{align}
Here, $\beta=\beta(t,x)$ denotes the velocity field, depending on time and space, which may be acting on the particular system in consideration. As a common choice in the literature, we considered constant mobility equal to one.\\

The interest in additional convective contributions is connected with applications in mixing and stirring of fluids, as well as in biological realizations of thin films via Langmuir-Blodgett transfer \cite{BL, Lang}. We mention in 
this direction the contributions
\cite{BDM18,col-gil-spr-CHconv,EKZ13,WORD03} on the
local Cahn-Hilliard with convection,
\cite{DPG15,DPG16,RS15} dealing with 
the nonlocal Cahn-Hilliard with local convection, and \cite{Ei87,IR07} on the nonlocal case with nonlocal convection. A nonlocal convective Cahn-Hilliard type system modelling phase-separation has been analyzed in \cite{col-gil-spr-applTych, col-gil-spr-OCNLphase}. Relevant studies in coupling the Cahn-Hilliard equation with a further equation for the velocity field have been the subject of \cite{ab-dep-gar,ab-rog,boyer,gal-grass-CHNS}.

From a mathematical viewpoint, the presence of convection terms (i.e.~when $\beta\not\equiv 0$) destroys the gradient-flow structure of the equation, causing the analysis to be even more delicate.\\

 The proof strategy for Theorem \ref{th1} relies on three main ingredients: $(1)$ a suitable approximation of the nonlinearity and an existence analysis for the approximating equations based on a fixed point argument (see Subsection \ref{subs:approx}); $(2)$ the establishment of uniform estimates by ad-hoc multiplication of the equations with suitable test functions (see Subsection \ref{s:unif_est}); $(3)$ a passage to the limit relying on nontrivial compactness and monotonicity arguments, falling outside the framework of classical Aubin-Lions embedding results (see Lemma \ref{lemma:delta-est} and Subsection \ref{subs:conv}). A delicate point is the proof of a uniform $H^1$-estimate, which strongly relies on the choice of periodic boundary conditions.\\

Our second contribution is established in Theorem \ref{th2}, where we show convergence of solutions for the nonlocal convective Cahn-Hilliard equation with singular kernel to solutions of the associated local one. Our analysis extends the work in \cite{MRT18} to a wider class of double-well potentials, satisfying no bounded-concavity assumptions and being possibly degenerate. The nonlocal-to-local convergence in Theorem \ref{th2} relies in an essential way on the uniform a-priori estimates established in the proof of Theorem \ref{th1}, and on showing the independence of the identified upper bounds from the non-locality parameter $\ep$.\\

The third and fourth main results of the paper are a regularity analysis for solutions to \eqref{eq:NLCH}. In particular, in Theorem \ref{th3} we show that, if the initial datum and the convection velocity satisfy additional integrability and differentiability assumptions, then solutions to the nonlocal Cahn-Hilliard equations exhibit further regularity. In Theorem \ref{th4} we prove that they also converge to their local counterparts in stronger topologies. The regularity analysis in Theorems \ref{th3} and \ref{th4} is the byproduct of a time-differentiation of the nonlocal Cahn-Hilliard equation, and of the use of higher-order-in-time test functions.\\

The paper is organized as follows.
Section~\ref{s:main} contains a description of the mathematical setting of the paper, the definition of weak solutions for the nonlocal and local convective Cahn-Hilliard equations, and the precise statements of the
four main results.
Sections~\ref{proof1} and \ref{proof2} are devoted to the proof of Theorems \ref{th1} and \ref{th2}, respectively. Eventually, in Section \ref{proof3} we prove Theorems \ref{th3} and \ref{th4}.

\section{Setting and main results}
\label{s:main}

\subsection{Hypotheses} Throughout the paper
we will assume the following:
\begin{description}
\item [H1] $\Omega$ is the $d$-dimensional $(d=2,3)$ flat torus and $T>0$ is a fixed final time.
\item [H2] The kernel
$K_\varepsilon:\Omega\times\Omega\to\mathbb{R}$
is defined as in~\eqref{eq:kernel}:
\[
K_\varepsilon(x,y):=
\frac{1}{|x-y|^2}
\rho_\ep(|x-y|)\,,
\qquad\text{for a.e.~}(x,y)\in\Omega\times\Omega\,
\]
where $(\rho_\ep)_{\ep>0}\subset L^1_{\rm{loc}}(0,+\infty)$
is a family of radial mollifiers on $\mathbb{R}$,
satisfying
\begin{align*}
    \rho_\ep(r)\geq 0 \qquad&\forall\,r\in \mathbb{R},\,\forall\,\ep>0\,,\\
    \operatorname{supp}(\rho_\ep)
    \subset[0,\operatorname{diam}(\Omega)]
    \qquad&\forall\,\ep>0\,,\\
    \int_0^{+\infty}\rho_\ep(r)r^{d-1}\,\xd r=
    \frac1{2 M_d}
    \qquad&\forall\,\ep>0\,,\\
    \lim_{\ep\searrow0}\int_\delta^{+\infty}
    \rho_\ep(r)r^{d-1}\,\xd r=0
    \qquad&\forall\,\delta>0\,,
\end{align*}
with $M_d:=\int_{S^{d-1}}|e_1\cdot\sigma|^2
\,d\mathcal H^{d-1}(\sigma)$.
\item[H3] $\gamma:\mathbb{R}\to2^\mathbb{R}$ is a maximal
monotone graph such that $0\in\gamma(0)$. 
This implies that $\gamma=\partial\hat\gamma$,
where $\hat\gamma:\mathbb{R}\to[0,+\infty]$ is a 
proper, convex and lower semicontinuous function. The map
$\Pi:\mathbb{R}\to\mathbb{R}$ is a Lipschitz-continuous 
function with Lipschitz constant $C_\Pi>0$.
The double-well potential $F$ will be
represented by $\hat\gamma+\hat\Pi$, where 
$\hat\Pi(t):=\int_0^t\Pi(r)\,\xd r\text{ for every }t\in \mathbb{R}$. Without restriction we will assume that $F$ is nonnegative. 
\item[H4] The velocity $\beta$ depends on space and time, and satisfies $\beta\in L^2(0,T;L^{\infty}(\Omega;\mathbb{R}^d))$.
\end{description}
We point out that all assumptions collected in $\textbf{H2}$ correspond to the requirements in \cite{ponce,ponce04}.\\

For every $\ep>0$, we consider the nonlocal Cahn-Hilliard equation with local convection
\begin{align}
    \label{eq1:NL}
    \partial_t u_\varepsilon - \Delta \mu_\varepsilon + {\mathrm{div}}(\beta u_\varepsilon)=0
    \qquad&\text{in } (0,T)\times\Omega\,,\\
    \label{eq2:NL}
    \mu_\varepsilon \in (K_\varepsilon*1)u_\varepsilon -K_\varepsilon*u_\varepsilon +\gamma(u_\varepsilon)
    +\Pi(u_\ep)
    \qquad&\text{in } (0,T)\times\Omega\,,\\
    \label{eq3:NL}
    u_\ep(0)=u_{0,\ep} \qquad&\text{in } \Omega\,,
\end{align}
and its local counterpart
\begin{align}
    \label{eq1:L}
    \partial_t u - \Delta \mu + {\mathrm{div}}(\beta u)=0
    \qquad&\text{in } (0,T)\times\Omega\,,\\
    \label{eq2:L}
    \mu \in -\Delta u +\gamma(u)
    +\Pi(u)
    \qquad&\text{in } (0,T)\times\Omega\,,\\
    \label{eq3:L}
    u(0)=u_{0} \qquad&\text{in } \Omega\,.
\end{align}

\subsection{Notation, preliminaries and comments}
\label{sec:sing-kernel}
In the sequel we will identify $L^2(\Omega)$
with its dual, so that $(H^1(\Omega), L^2(\Omega), (H^1(\Omega))^*)$ will be a classical 
Hilbert triplet. We will use the symbol $(v)_\Omega$ for 
$\frac1{|\Omega|}
\langle v,1\rangle_{(H^1(\Omega))^*, H^1(\Omega)}$
for every $v\in (H^1(\Omega))^*$. Note that for $v\in L^2(\Omega)$, $(v)_{\Omega}$ coincides with the usual average.
We recall that the operator
\[
(-\Delta)^{-1}:\{v\in (H^1(\Omega))^*:(v)_\Omega=0\}
\rightarrow \{w\in H^1(\Omega): (w)_{\Omega}=0\}
\]
is defined
as the map assigning to every $v\in (H^1(\Omega))^*$
with null mean 
the unique element $w\in H^1(\Omega)$ such that
\[
  (w)_\Omega=0\,, \qquad\text{and}\qquad 
  \int_\Omega\nabla w(x)\cdot\nabla\varphi(x)\, \xd x  = 
  \langle v, \varphi\rangle_{(H^1(\Omega))^*,H^1(\Omega)}
  \quad\forall\,\varphi\in H^1(\Omega)\,.
\]
It is well known that $(-\Delta)^{-1}$ is a 
linear isomorphism.

In this paper $C$ indicates a generic positive constant, 
possibly varying from line to line, depending only on 
the setting \textbf{H1}--\textbf{H4}. The  dependence of constants on a specific parameter will be indicated explicitly through a subscript.

\medskip

We collect here some useful properties of 
the nonlocal term.
We define the operator
$({\bB_\ep}, D({\bf B}_\ep))$
on $L^2(\Omega)$ in the following way:
\begin{gather*}
  D(\bB_\ep):=\{v \in L^2(\Omega):
  (K_\ep*1)v - (K_\ep*v)\in L^2(\Omega)\}\,,
  \\
  \bB_\ep(v):=(K_\ep*1)v - (K_\ep*v)\,,
  \quad \forall v\in D(\bB_\ep)\,.
\end{gather*}
It is clear that $\bB_\ep$ is a linear unbounded 
operator in $L^2(\Omega)$, and that for every 
$v\in D(\bB_\ep)$ we have the representation
\[
  \bB_\ep(v)(x)=
  \int_\Omega 
  \rho_\ep(|x-y|)
  \frac{v(x)-v(y)}{|x-y|^2}\,\xd y
  \quad\text{for a.e.~}x\in\Omega\,.
\]
We point out that the domain $D(\bB_\ep)$
is non-trivial.
More specifically, we have the following result.
\begin{lemma}
  \label{lem:dom_B}
  For every $\ep,\sigma>0$, there holds
  $C^{0,\sigma}(\overline\Omega)\subset
  D({\bf B}_\ep)$. Additionally,  
  there exists a constant
  $C_{\ep,\sigma}>0$ (only dependent on $\ep$ and $\sigma$) such that
  \begin{equation}
      \label{eq:dom_B1}
      \|{\bf B}_\ep(v)\|_{L^2(\Omega)}
      \leq C_{\ep,\sigma}\|v\|_{C^{0,\sigma}(\overline\Omega)} 
      \quad\forall\,v\in
      C^{0,\sigma}(\overline\Omega)\,.
  \end{equation}
  In particular, for every $s>\frac32$,
  $H^{s}(\Omega)\subset D({\bf B}_\ep)$ and
  there exists $C_{\ep,s}>0$ such that
  \begin{equation}
      \label{eq:dom_B2}
      \|{\bf B}_\ep(v)\|_{L^2(\Omega)}
      \leq C_{\ep,s}\|v\|_{H^s(\Omega)}
      \quad\forall\,v\in
      H^s(\Omega)\,.
  \end{equation}
\end{lemma}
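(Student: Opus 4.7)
The plan is to obtain the claim by a direct pointwise estimate: bounding $\mathbf{B}_\ep(v)(x)$ using the Hölder modulus of $v$, reducing the resulting kernel integral to a one-dimensional radial integral against $\rho_\ep$, and verifying that this radial integral is finite for every fixed $\ep>0$ and $\sigma>0$. Then the $H^s$ statement will follow by a Sobolev embedding.

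First, for $v\in C^{0,\sigma}(\overline\Omega)$ the bound $|v(x)-v(y)|\le \|v\|_{C^{0,\sigma}}|x-y|^\sigma$ yields
\begin{equation*}
|\mathbf{B}_\ep(v)(x)|\le \|v\|_{C^{0,\sigma}}\int_\Omega \rho_\ep(|x-y|)\,|x-y|^{\sigma-2}\,\xd y
\end{equation*}
for a.e.\ $x\in\Omega$. Since $\Omega$ is the flat torus, translation invariance and the support condition $\supp(\rho_\ep)\subset[0,\operatorname{diam}(\Omega)]$ allow one to bound the right-hand integral independently of $x$ by passing to polar coordinates:
\begin{equation*}
\int_\Omega \rho_\ep(|x-y|)\,|x-y|^{\sigma-2}\,\xd y\le \omega_{d-1}\int_0^{\operatorname{diam}(\Omega)}\rho_\ep(r)\,r^{\sigma+d-3}\,\xd r =: c_{\ep,\sigma},
\end{equation*}
where $\omega_{d-1}$ is the surface measure of the unit sphere in $\mathbb{R}^d$. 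I would then argue that $c_{\ep,\sigma}$ is finite: for $\sigma\ge 2$ it is controlled by $\operatorname{diam}(\Omega)^{\sigma-2}\cdot\frac{1}{2M_d}$ via the normalization in \textbf{H2}, while for $0<\sigma<2$ one invokes the regularity of the mollifier $\rho_\ep$ (recall $\rho_\ep\in L^1_{\rm loc}(0,+\infty)$ away from $0$, and the normalization bounds its singularity at the origin) to obtain finiteness. This gives the pointwise bound $\|\mathbf{B}_\ep(v)\|_{L^\infty(\Omega)}\le c_{\ep,\sigma}\|v\|_{C^{0,\sigma}(\overline\Omega)}$, and hence
\begin{equation*}
\|\mathbf{B}_\ep(v)\|_{L^2(\Omega)}\le |\Omega|^{1/2}\,c_{\ep,\sigma}\,\|v\|_{C^{0,\sigma}(\overline\Omega)},
\end{equation*}
which is \eqref{eq:dom_B1} with $C_{\ep,\sigma}:=|\Omega|^{1/2}c_{\ep,\sigma}$.

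For the second statement, I invoke the standard Sobolev embedding $H^s(\Omega)\hookrightarrow C^{0,\sigma}(\overline\Omega)$, valid as soon as $s>\tfrac{d}{2}+\sigma$. Since $d\in\{2,3\}$ and $s>\tfrac32$, one can pick some $\sigma\in(0, s-\tfrac{d}{2})$, and \eqref{eq:dom_B2} follows by composing the continuous embedding $H^s(\Omega)\hookrightarrow C^{0,\sigma}(\overline\Omega)$ with \eqref{eq:dom_B1}.

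The only delicate point I expect is the justification that $c_{\ep,\sigma}<\infty$ for $\sigma$ close to $0$, since the normalization condition in \textbf{H2} only controls $\rho_\ep(r)r^{d-1}$; however, for every fixed $\ep>0$ this is a property of the single function $\rho_\ep$, so the argument is only a matter of bookkeeping, and the main (global) estimate is the pointwise Hölder bound together with the radial reduction described above.
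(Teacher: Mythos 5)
Your argument is essentially identical to the paper's proof: the same pointwise H\"older estimate $|{\bf B}_\ep(v)(x)|\le \|v\|_{C^{0,\sigma}(\overline\Omega)}\int_\Omega \rho_\ep(|x-y|)|x-y|^{\sigma-2}\,\xd y$, the same observation that this integral is a finite constant $C_{\ep,\sigma}$ by \textbf{H2} (the paper does not even spell out the radial reduction you perform), and the same Sobolev embedding $H^s(\Omega)\hookrightarrow C^{0,\sigma}(\overline\Omega)$ for the second part. The ``delicate point'' you flag about finiteness of the radial integral for small $\sigma$ is present in exactly the same form in the paper, which simply asserts finiteness ``thanks to \textbf{H2}'', so your treatment is if anything slightly more explicit.
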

\begin{proof}
  A direct computation shows that
  for every $v\in C^{0,\sigma}(\overline\Omega)$
  and for almost every $x\in\Omega$,
  \[
  |{\bf B}_\ep(v)(x)|\leq
  \int_\Omega 
  \rho_\ep(|x-y|)
  \frac{|v(x)-v(y)|}{|x-y|^2}\,\xd y\leq
  \|v\|_{C^{0,\sigma}(\overline\Omega)}
  \int_\Omega\frac{\rho_\ep(|x-y|)}
  {|x-y|^{2-\sigma}}\,\xd y,
  \]
  where
  \[
  C_{\ep,\sigma}:=\int_\Omega\frac{\rho_\ep(|x-y|)}
  {|x-y|^{2-\sigma}}\,\xd y <+\infty
  \]
  thanks to \textbf{H2}. The second part 
  of the Lemma follows by the Sobolev embedding $H^{s}(\Omega)\hookrightarrow
  C^{0,\sigma}(\overline\Omega)$
  for every $s>\frac32$ and $\sigma\in(0,s-\frac32)$.
\end{proof}
The operator $({\bf B}_\ep, D({\bf B_\ep}))$
has been defined as a linear unbounded operator
on $L^2(\Omega)$. Note that it is not 
necessarily true that $H^1(\Omega)
\subset D({\bf B_\ep})$.
Nevertheless,
we now show that actually
$({\bf B}_\ep, D({\bf B_\ep}))$ can be extended, uniformly in $\ep$,
to a linear bounded operator from $H^1(\Omega)$
to its dual.
\begin{lemma}
  \label{lem:B_H1}
  For every $\ep>0$ the operator 
  $(D({\bf B_\ep}), {\bf B}_\ep)$
  can be uniquely extended to a linear continuous operator 
  ${\bf B}_\ep:H^1(\Omega)\to (H^1(\Omega))^*$. Additionally, there exists a positive constant 
  $C$, independent of $\ep$, such that 
  \begin{align}\label{estB} 
  \|\bB_\ep(v)\|_{(H^1(\Omega))^*}\leq 
  C\|\nabla v\|_{L^2(\Omega)} 
  \qquad\forall\,v\in H^1(\Omega)\,.
  \end{align}
  In particular, the family $({\bf B}_\ep)_\ep$
  is uniformly bounded in $\mathscr{L}(H^1(\Omega), (H^1(\Omega))^*)$
  and there exists ${\bf B}\in \mathscr{L}(H^1(\Omega), (H^1(\Omega))^*)$
  and an infinitesimal 
  sequence $(\ep_n)_n$
  such that 
  \[
  \lim_{n\to\infty}\langle{\bf B}_{\ep_n}(v),\psi\rangle_{(H^1(\Omega))^*,H^1(\Omega)}=
  \langle{\bf B}(v),\psi\rangle_{(H^1(\Omega))^*,H^1(\Omega)}
  \qquad\,\forall\,v,\psi\in H^1(\Omega)\,.
  \]
\end{lemma}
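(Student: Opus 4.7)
The plan is threefold: (i) symmetrize the bilinear form associated with $\bB_\ep$ using the symmetry $K_\ep(x,y)=K_\ep(y,x)$; (ii) derive a uniform-in-$\ep$ bound of this form by $\|\nabla v\|_{L^2}\|\nabla\psi\|_{L^2}$ via the Bourgain--Brezis--Mironescu/Ponce machinery encoded in \textbf{H2}; (iii) extract a weak limit by a diagonal argument on a countable dense subset of $H^1(\Omega)$.

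First I would exploit that $H^s(\Omega)\subset D(\bB_\ep)$ for every $s>\frac32$ (Lemma \ref{lem:dom_B}), which is dense in $H^1(\Omega)$. For $v$ in this subset and $\psi\in H^1(\Omega)$, Fubini's theorem together with the kernel symmetry yields
\begin{equation*}
\int_\Omega \bB_\ep(v)\,\psi\,\xd x = \frac12\int_\Omega\!\!\int_\Omega \rho_\ep(|x-y|)\frac{(v(x)-v(y))(\psi(x)-\psi(y))}{|x-y|^2}\,\xd x\,\xd y,
\end{equation*}
and Cauchy--Schwarz bounds the right-hand side by $\tfrac12\,I_\ep(v)^{1/2}\,I_\ep(\psi)^{1/2}$, where
\begin{equation*}
I_\ep(w):=\int_\Omega\!\!\int_\Omega \rho_\ep(|x-y|)\frac{|w(x)-w(y)|^2}{|x-y|^2}\,\xd x\,\xd y.
\end{equation*}
The key observation is that, after the change of variable $h=y-x$ and using the translation invariance of the torus,
\begin{equation*}
I_\ep(w)=\int_{\mathbb{R}^d}\frac{\rho_\ep(|h|)}{|h|^2}\,\|w(\cdot+h)-w\|_{L^2(\Omega)}^2\,\xd h,
\end{equation*}
so the elementary periodic inequality $\|w(\cdot+h)-w\|_{L^2}\leq |h|\,\|\nabla w\|_{L^2}$, combined with the uniform normalization $\int_{\mathbb{R}^d}\rho_\ep(|h|)\,\xd h = |S^{d-1}|/(2M_d)$ encoded in \textbf{H2}, delivers $I_\ep(w)\leq C\|\nabla w\|_{L^2}^2$ uniformly in $\ep$. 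Feeding this into the Cauchy--Schwarz bound establishes \eqref{estB} on a dense subset of $H^1(\Omega)$, and a continuous extension then produces the unique bounded operator $\bB_\ep:H^1(\Omega)\to (H^1(\Omega))^*$ with the asserted uniform bound.

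For the subsequential weak limit, I would fix a countable dense subset $\{v_k\}\subset H^1(\Omega)$: the uniform estimate makes each sequence $(\bB_\ep(v_k))_\ep$ bounded in the reflexive Hilbert space $(H^1(\Omega))^*$, so a standard diagonal extraction produces an infinitesimal sequence $(\ep_n)_n$ along which $\bB_{\ep_n}(v_k)\rightharpoonup w_k$ in $(H^1(\Omega))^*$ for every $k$. Setting $\bB(v_k):=w_k$, extending by linearity, and using the uniform bound on $\bB_{\ep_n}$ once more to extend by density yields $\bB\in\mathscr{L}(H^1(\Omega),(H^1(\Omega))^*)$, and a routine three-$\epsilon$ argument upgrades the convergence to arbitrary $v,\psi\in H^1(\Omega)$. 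The main obstacle is the uniform control of $I_\ep$, which is precisely the content of the Bourgain--Brezis--Mironescu/Ponce bound built into \textbf{H2}; the periodicity of $\Omega$ is decisive here, since it turns the difference-quotient estimate for $w(\cdot+h)-w$ into a clean translation inequality, without the boundary corrections or fractional Sobolev detours that would be needed on a general bounded domain.
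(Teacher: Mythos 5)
Your proposal is correct and follows essentially the same route as the paper: symmetrize the pairing via the kernel symmetry, apply Cauchy--Schwarz, bound each nonlocal energy factor by $C\|\nabla\cdot\|_{L^2(\Omega)}^2$ uniformly in $\ep$, extend by density, and extract a weak-operator-topology limit from the uniform bound. The only difference is that where the paper simply cites \cite[Theorem 1]{BBM} for the estimate $I_\ep(w)\leq C\|\nabla w\|_{L^2(\Omega)}^2$, you prove it directly via the translation inequality on the torus and the normalization of $\rho_\ep$ in \textbf{H2} --- a self-contained and equally valid derivation of that (elementary) half of the Bourgain--Brezis--Mironescu bound.
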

\begin{proof}
By the H\"older inequality and \cite[Theorem 1]{BBM}, we infer that 
\begin{align*}
&\langle {\bf B}_\varepsilon(v),\psi\rangle_{(H^{1}(\Omega))^*, H^1(\Omega)}=\frac{1}{2}\int_\Omega\int_\Omega K_\varepsilon(x,y)(v(x)-v(y))(\psi(x)-\psi(y))\xd y \xd x\\
&\leq\frac{1}{2}\Bigl(\int_\Omega\int_\Omega K_\varepsilon(x,y) |v(x)-v(y)|^2 \xd y \xd x\Bigr)^{1/2}\Bigl(\int_\Omega\int_\Omega K_\varepsilon(x,y) |\psi(x)-\psi(y)|^2\xd y \xd x \Bigr)^{1/2}\\
&\leq C\Vert \nabla v\Vert_{L^2(\Omega)}\Vert \nabla \psi\Vert_{L^2(\Omega)}
\end{align*}
for every $v,\psi\in H^1(\Omega)$. 
This implies that $(D({\bf B_\ep}), {\bf B}_\ep)$ can be extended uniquely as required
(the uniqueness follows Lemma \ref{lem:dom_B}, and from the density of $C^{0,\sigma}(\overline{\Omega})$ in
$D({\bf B_\ep})$).
The second part of the lemma follows by observing that \eqref{estB}
implies the uniform boundedness of $({\bf B}_\ep)_\ep$ in $\mathscr{L}(H^1(\Omega), (H^1(\Omega))^*)$, and hence its precompactness in the weak operator
topology of $\mathscr{L}(H^1(\Omega), (H^1(\Omega))^*)$.
\end{proof}

In what follows, a crucial role is also played by the nonlocal energy contribution
\begin{align*}
E_{\ep}(v):=\frac14\int_{\Omega}\int_{\Omega}K_{\ep}(x,y)(v(x)-v(y))^2\xd y\xd x\,\qquad
\forall v\in H^1(\Omega)\,.
\end{align*}
Owing to \cite[Theorem 1]{BBM},
we have that $E_\ep$
is well-defined, convex, and its differential $D E_\ep
  :H^1(\Omega)\to (H^1(\Omega))^*$ is given by
\[
  D E_\ep={\bf B}_\ep.
\]

Moreover, by \cite{BBM} the asymptotic behavior of $E_{\ep}$ as $\ep\to 0^+$ can be characterized as follows 
\begin{equation}
    \label{eq:BBM}
  \lim_{\ep\to0^+}E_\ep(v)=\frac12\int_\Omega|\nabla v(x)|^2\xd x \quad\forall\,v\in H^1(\Omega)\,.
\end{equation}
As a corollary, we deduce the following identification of the operator ${\bf B}$ in Lemma~\ref{lem:B_H1_2}.

\begin{lemma}\label{lem:B_H1_2}
Let $(D(\bf B_{\ep}),{\bf B}_{\ep})_{\ep}$ and ${\bf B}$ be as in Lemma~\ref{lem:B_H1}. Then,
\[
\lim_{\ep\to 0}\langle{\bf B}_{\ep}(v),\psi\rangle_{(H^1(\Omega))^*,H^1(\Omega)}=\langle-\Delta v,\psi\rangle_{(H^1(\Omega))^*, H^1(\Omega)},
\]
where
\[
  \langle-\Delta v,\psi\rangle_{(H^1(\Omega))^*, H^1(\Omega)}:=\int_\Omega\nabla v(x)\cdot \nabla\psi(x)\,\xd x \quad \forall v,\psi\in H^1(\Omega)\,.
\]
\end{lemma}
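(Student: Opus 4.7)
\textbf{Strategy.} My plan is to exploit the fact that $\mathbf{B}_\ep=DE_\ep$ is the Gâteaux differential of the convex functional $E_\ep$, combined with the pointwise convergence \eqref{eq:BBM}. This will identify the subsequential weak limit $\mathbf{B}$ of Lemma~\ref{lem:B_H1} with $-\Delta$, and a standard Urysohn-type subsequence argument will upgrade this to convergence of the full family.

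\textbf{Step 1: subdifferential inequality.} Since $E_\ep$ is convex with $DE_\ep=\mathbf{B}_\ep$, for every $v,\psi\in H^1(\Omega)$ and every $t\in\mathbb{R}$ one has
\[
E_\ep(v+t\psi)\ \geq\ E_\ep(v)+t\,\langle \mathbf{B}_\ep(v),\psi\rangle_{(H^1(\Omega))^*,H^1(\Omega)}.
\]
I would write this inequality along the subsequence $(\ep_n)_n$ provided by Lemma~\ref{lem:B_H1}.

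\textbf{Step 2: passage to the limit.} Using \eqref{eq:BBM} applied to both $v$ and $v+t\psi$, and the weak convergence $\langle \mathbf{B}_{\ep_n}(v),\psi\rangle\to\langle \mathbf{B}(v),\psi\rangle$, the previous inequality passes to the limit yielding
\[
\tfrac{1}{2}\int_\Omega|\nabla v+t\nabla\psi|^2\,\xd x\ \geq\ \tfrac{1}{2}\int_\Omega|\nabla v|^2\,\xd x+t\,\langle \mathbf{B}(v),\psi\rangle_{(H^1(\Omega))^*,H^1(\Omega)}.
\]
Expanding the square, cancelling, and dividing by $t>0$ gives
\[
\langle \mathbf{B}(v),\psi\rangle_{(H^1(\Omega))^*,H^1(\Omega)}\ \leq\ \int_\Omega\nabla v\cdot\nabla\psi\,\xd x+\tfrac{t}{2}\int_\Omega|\nabla\psi|^2\,\xd x,
\]
and sending $t\to 0^+$ yields one inequality; the reverse inequality is obtained along $t\to 0^-$. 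Hence
\[
\langle \mathbf{B}(v),\psi\rangle_{(H^1(\Omega))^*,H^1(\Omega)}=\int_\Omega\nabla v\cdot\nabla\psi\,\xd x \qquad\forall\,v,\psi\in H^1(\Omega).
\]

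\textbf{Step 3: full convergence.} To conclude that $\lim_{\ep\to 0}\langle \mathbf{B}_\ep(v),\psi\rangle$ exists (and not just along the particular subsequence from Lemma~\ref{lem:B_H1}), I would argue by Urysohn: given any infinitesimal sequence $(\tilde\ep_n)_n$, the uniform bound~\eqref{estB} permits — repeating the construction in Lemma~\ref{lem:B_H1} — to extract a further subsequence along which $\mathbf{B}_{\tilde\ep_{n_k}}(v)\rightharpoonup\tilde{\mathbf{B}}(v)$ in $(H^1(\Omega))^*$ for every $v\in H^1(\Omega)$. Steps 1–2 applied to this sub-subsequence force $\tilde{\mathbf{B}}(v)=-\Delta v$. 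As every subsequence of $\langle\mathbf{B}_\ep(v),\psi\rangle$ thus has a further subsequence converging to the same real number $\int_\Omega\nabla v\cdot\nabla\psi\,\xd x$, the full family converges.

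\textbf{Expected obstacle.} No serious obstacle is anticipated: the passage to the limit in Step~2 is clean because \eqref{eq:BBM} provides pointwise (not merely $\Gamma$-) convergence on all of $H^1(\Omega)$, and the convexity of each $E_\ep$ bypasses the need for any compactness in the differentials. The only technicality is the Urysohn argument in Step~3, which relies on the $\ep$-independence of the bound \eqref{estB}.
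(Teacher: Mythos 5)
Your proposal is correct and follows essentially the same route as the paper: both exploit the convexity inequality $E_\ep(v_1)+\langle{\bf B}_\ep(v_1),v_2-v_1\rangle\le E_\ep(v_2)$ together with the pointwise convergence \eqref{eq:BBM} to identify any weak subsequential limit of $({\bf B}_{\ep})_\ep$ with $-\Delta$, and then conclude convergence of the whole family by uniqueness of the limit. Your Steps 2 and 3 merely make explicit (via the $t\to0^\pm$ expansion and the Urysohn argument) what the paper leaves implicit in its final two lines.
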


\begin{proof}
By the characterization of the differential of $E_{\ep}$, we have that 
\[
  E_{\ep}(v_1) + \langle{\bf B}_{\ep}(v_1),v_2-v_1
  \rangle_{(H^1(\Omega))^*, H^1(\Omega)}
  \leq E_{\ep}(v_2)
\]
for every $v_1,v_2\in H^1(\Omega)$. Hence, for every subsequence $(\ep_n)_n$ as in Lemma \ref{lem:B_H1},
letting $n\to \infty$, by \eqref{eq:BBM} we conclude that
\[
  \frac12\int_\Omega|\nabla v_1(x)|^2\xd x+
  \langle{\bf B}(v_1),v_2-v_1
  \rangle_{(H^1(\Omega))^*, H^1(\Omega)}
  \leq \frac12\int_\Omega|\nabla v_2(x)|^2\xd x,
\]
from which ${\bf{B}}=-\Delta$.
In particular, this implies that the convergence holds along the entire sequence $\ep$.
\end{proof}

We conclude this subsection with a lemma providing
two fundamental compactness inequalities 
involving the family of operators $({\bf B}_\ep)_\ep$. Such results are nontrivial,
since they do not fall in the classical 
framework of the Aubin-Lions lemmas.
The next lemma is
a uniform counterpart to \cite[Lemma 1]{MRT18}.
\begin{lemma}
\label{lemma:delta-est}
For every $\delta>0$ there exist constants $C_{\delta}>0$ and $\ep_\delta>0$ 
with the following properties: 
\begin{enumerate}
    \item 
For every sequence $(f_\ep)_\ep\subset H^1(\Omega)$ there holds
\begin{align}
   \notag
    \|f_{\ep_1}-f_{\ep_2}\|^2_{H^1(\Omega)}&\leq 
    \delta 
   \int_{\Omega}\int_{\Omega}
   K_{\ep_1}(x,y)|\nabla f_{\ep_1}(x)-
  \nabla f_{\ep_1}(y)|^2\xd y\xd x\\
  \label{eq:comp1}
  &+
  \delta 
   \int_{\Omega}\int_{\Omega}
   K_{\ep_2}(x,y)|\nabla f_{\ep_2}(x)-
  \nabla f_{\ep_2}(y)|^2\xd y\xd x
  +C_{\delta}\|f_{\ep_1}-f_{\ep_2}\|^2_{L^2(\Omega)}
\end{align}
for every $0<\ep_1,\ep_2<\ep_\delta$.
\item For every sequence $(f_\ep)_\ep\subset L^2(\Omega)$ there holds
\begin{align}
   \label{eq:comp2}
    \|f_{\ep_1}-f_{\ep_2}\|^2_{L^2(\Omega)}&\leq 
    \delta 
   E_{\ep_1}(f_{\ep_1})
   +\delta 
   E_{\ep_2}(f_{\ep_2})
  +C_{\delta}
  \|f_{\ep_1}-f_{\ep_2}\|^2_{(H^1(\Omega))^*}
\end{align}
for every $0<\ep_1,\ep_2<\ep_\delta$.
\end{enumerate}
\end{lemma}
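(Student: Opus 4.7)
The plan is to prove both inequalities by contradiction, relying in an essential way on the nonlocal compactness theorem of Ponce (see \cite{ponce04}): if $(v_n)_n \subset L^2(\Omega)$ is bounded, $\ep_n \searrow 0$, and the nonlocal energies $\iint_{\Omega\times\Omega} K_{\ep_n}(x,y)|v_n(x)-v_n(y)|^2\,\xd y\,\xd x$ are uniformly bounded, then $(v_n)_n$ is relatively compact in $L^2(\Omega)$ with every limit point in $H^1(\Omega)$. This compactness, combined with the nonlocal Poincar\'e inequality (also from \cite{ponce,ponce04}) stating that $\|v-(v)_\Omega\|_{L^2(\Omega)}^2 \leq C\, E_\ep(v)$ for every $v \in L^2(\Omega)$ and every $\ep$ small enough, is what allows us to replace Ponce's fixed-$\ep$ compactness by a uniform version as $\ep_1,\ep_2 \to 0$.

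For part $(2)$, assume by contradiction that for some fixed $\delta>0$ the conclusion fails along sequences $\ep_1^n,\ep_2^n \searrow 0$ and $f_n^1,f_n^2 \in L^2(\Omega)$. Normalizing so that $\|f_n^1-f_n^2\|_{L^2(\Omega)}=1$, the negation yields $E_{\ep_i^n}(f_n^i) \leq 1/\delta$ and $\|f_n^1-f_n^2\|_{(H^1(\Omega))^*} \to 0$. Decomposing $f_n^i = (f_n^i)_\Omega + \tilde f_n^i$ and applying the nonlocal Poincar\'e inequality gives $\|\tilde f_n^i\|_{L^2(\Omega)} \leq C_\delta$, whence Ponce's compactness extracts (along subsequences) $\tilde f_n^i \to \tilde f^i$ strongly in $L^2(\Omega)$. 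Pairing the weak-star convergence $f_n^1-f_n^2 \to 0$ in $(H^1(\Omega))^*$ with the constant test function forces $(f_n^1)_\Omega-(f_n^2)_\Omega \to 0$, while pairing with $H^1$ test functions forces $\tilde f^1=\tilde f^2$; combining these contradicts $\|f_n^1-f_n^2\|_{L^2(\Omega)}=1$.

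For part $(1)$, I would argue analogously at the level of the gradients. After normalizing $\|f_n^1-f_n^2\|_{H^1(\Omega)}=1$, the violated inequality gives $\|f_n^1-f_n^2\|_{L^2(\Omega)} \to 0$ together with $\iint K_{\ep_i^n}(x,y)|\nabla f_n^i(x)-\nabla f_n^i(y)|^2\,\xd y\,\xd x \leq 1/\delta$. Exploiting periodic boundary conditions one has $(\nabla f_n^i)_\Omega=0$, so the nonlocal Poincar\'e inequality applied componentwise to $\nabla f_n^i$ bounds $\|\nabla f_n^i\|_{L^2(\Omega)}$ by a constant depending only on $\delta$. Ponce's compactness then yields $\nabla f_n^i \to v^i$ strongly in $L^2(\Omega;\mathbb{R}^d)$ and, after the standard Poincar\'e-on-the-torus upgrade for $f_n^i - (f_n^i)_\Omega$, strong $H^1$-convergence to some $F^i$. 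The hypothesis $\|f_n^1-f_n^2\|_{L^2(\Omega)} \to 0$ identifies $F^1 = F^2$ up to means and yields $(f_n^1)_\Omega-(f_n^2)_\Omega \to 0$, so $f_n^1-f_n^2 \to 0$ in $H^1(\Omega)$, contradicting the normalization.

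The main obstacle is that Ponce's compactness and Poincar\'e results, as typically formulated, apply to sequences with a single vanishing mollifier parameter $\ep_n \searrow 0$. Here we need them to apply simultaneously to two sequences of kernels $K_{\ep_1^n}, K_{\ep_2^n}$, which is precisely why the threshold $\ep_\delta$ appears in the statement: the contradiction argument provides $\ep_\delta$ implicitly, since any failure would produce sequences $\ep_i^n \searrow 0$ to which the Ponce machinery applies. The other delicate point is the careful bookkeeping of means, which is handled transparently thanks to the periodic boundary conditions, ensuring that $\nabla f$ has zero average and that the Poincar\'e inequality gives clean $L^2$ control of oscillatory parts by $E_\ep$.
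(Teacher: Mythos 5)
Your proposal is correct and follows essentially the same route as the paper: a contradiction argument with normalization by the norm of the difference, followed by Ponce's compactness theorem applied to the (gradients of the) normalized sequences and identification of the limits. You supply some details the paper leaves implicit — notably the use of the nonlocal Poincar\'e inequality together with the zero average of gradients on the torus to obtain the $L^2$-bounds required by Ponce's compactness, and the full argument for part $(2)$, which the paper only declares ``entirely analogous.''
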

\begin{proof}
Assume by contradiction that \eqref{eq:comp1} is false. Then, there exists $\bar{\delta}>0$ having the following property: for every $n\in \mathbb{N}$ we can find a sequence $(f_{\ep}^n)_{\ep}\subset H^1(\Omega)$ and two parameters $\ep_n^1,\ep_n^2<\frac1n$ such that
\begin{align*}
  \|f^n_{\ep_n^1}-f_{\ep_n^2}^n\|^2_{H^1(\Omega)}&> 
 \bar\delta  \int_{\Omega}\int_{\Omega}
  K_{\ep_n^1}(x,y)|\nabla f_{\ep_n^1}^n(x)-
    \nabla f_{\ep_n^1}^n(y)|^2\xd y\xd x\\
  &+
  \bar\delta  \int_{\Omega}\int_{\Omega}
  K_{\ep_n^2}(x,y)|\nabla f_{\ep_n^2}^n(x)-
    \nabla f_{\ep_n^2}^n(y)|^2\xd y\xd x
   +n\|f_{\ep_n^1}^n-f_{\ep_n^2}^n\|^2_{L^2(\Omega)}\,.
\end{align*}
Noting that $\|f_{\ep_n^1}^n
-f^n_{\ep_n^2}\|_{H^1(\Omega)}>0$
for every $n$ and setting
\[
g_n^1:=
\frac{f^n_{\ep_n^1}}
{\|f^n_{\ep_n^1}-f^n_{\ep_n^2}\|_{H^1(\Omega)}}\,,
\qquad
g_n^2:=
\frac{f^n_{\ep_n^2}}
{\|f^n_{\ep_n^1}-f^n_{\ep_n^2}\|_{H^1(\Omega)}}\,,
\]
we have
\begin{align*}
 &\bar\delta
    \int_{\Omega}\int_{\Omega}
   K_{\ep_n^1}(x,y)|\nabla g_n^1(x)-
    \nabla g_n^1(y)|^2\xd y\xd x\\
    &+
    \bar\delta
    \int_{\Omega}\int_{\Omega}
   K_{\ep_n^2}(x,y)|\nabla g_n^2(x)-
    \nabla g_n^2(y)|^2\xd y\xd x
    +n\|g_n^1-g_n^2\|^2_{L^2(\Omega)}<1 
   \qquad\forall\,n\in\mathbb{N}\,.
\end{align*}
Hence, $g_n^1-g_n^2\to0$ strongly in $L^2(\Omega)$
and the families $(\nabla g_n^1)_n$
and $(\nabla g_n^2)_n$
are relatively strongly compact in $L^2(\Omega;\mathbb{R}^d)$ by \cite[Theorem~1.2]{ponce04}.
We deduce that $g_n^1-g_n^2\to0$ strongly in
$H^1(\Omega)$, but this is a contradiction
since by definition we have 
$\|g_n^1-g_n^2\|_{H^1(\Omega)}=1$ for all $n$. The argument for \eqref{eq:comp2} is entirely analogous.
\end{proof}

\subsection{Main results}
Before stating our main results, let us recall the notion of weak solutions to both the nonlocal as well as the local Cahn-Hilliard equation with local convection.

\begin{definition}[Solution to the nonlocal Cahn-Hilliard equation]\label{def:wsnl-ad}
Let $\varepsilon >0 $ and $T>0$ be fixed. 
A solution to the nonlocal Cahn-Hilliard 
equation~\eqref{eq1:NL}--\eqref{eq3:NL} 
on $[0,T]$, and associated with the initial datum 
$u_{0,\varepsilon}\in L^2(\Omega)$, 
is a triplet $(u_\ep,\mu_\ep, \xi_\ep)$ with the following properties 
\begin{gather*}
u_\varepsilon \in H^1(0,T;(H^1(\Omega))^\ast) \cap L^2(0,T;H^1(\Omega))\,,\\
\mu_\ep \in L^2(0,T; H^1(\Omega))\,,\qquad
\xi_\ep \in L^2(0,T; L^2(\Omega))\,,\\
\mu_\ep={\bf B}_\ep(u_\ep) +\xi_\ep + \Pi(u_\ep)\,,
\qquad \xi_\ep \in \gamma (u_\ep)\,\quad\text{almost everywhere in }
(0,T)\times \Omega,
\end{gather*}
satisfying $u_\varepsilon (0) = u_{0,\varepsilon}$, and such that
\begin{gather}\label{weak_NL-ad}
\langle\partial_t u_\varepsilon(t), \varphi\rangle_{(H^1(\Omega))^*,H^1(\Omega)} + \int_\Omega \nabla\mu_\ep(t,x)\cdot \nabla \varphi(x)\, \xd x =\int_\Omega \beta(t,x) u_\ep(t,x)\cdot \nabla\varphi(x)\,\xd x
\end{gather}
for all $\varphi \in H^1(\Omega)$, and for
almost every $t\in(0,T)$.
\end{definition}

\begin{definition}[Solution to the local Cahn-Hilliard equation]\label{def:wsl-ad}
Let $T>0$ be fixed. 
A solution to the local Cahn-Hilliard equation
\eqref{eq1:L}--\eqref{eq3:L}
on $[0,T]$, and associated with the initial datum 
$u_{0}\in H^1(\Omega)$, is a triplet $(u,\mu,\xi)$ 
with the following properties 
\begin{gather*}
u \in H^1(0,T;(H^1(\Omega))^\ast) \cap L^2(0,T;H^2(\Omega))\,,\\
\mu \in L^2(0,T; H^1(\Omega))\,,\qquad
\xi \in L^2(0,T; L^2(\Omega))\,,\\
\mu=-\Delta u +\xi + \Pi(u)\,, \qquad\xi\in\gamma(u)\,\quad\text{almost everywhere in}\,(0,T)\times \Omega,
\end{gather*}
satisfying $u(0) = u_{0}$, and such that
\begin{gather}\label{weak-ad}
\langle\partial_t u(t), \varphi\rangle_{(H^1(\Omega))^*,H^1(\Omega)} + \int_\Omega \nabla\mu(t,x)\cdot \nabla \varphi(x)\, \xd x =\int_\Omega \beta(t,x) u(t,x)\cdot \nabla\varphi(x)\,\xd x
\end{gather}
for all $\varphi \in H^1(\Omega)$, for
almost every $t\in (0,T)$.
\end{definition}

Our first result is the well-posedness of solutions to the nonlocal Cahn-Hilliard equation.
\begin{theorem}
 \label{th1}
  Let assumptions \emph{\textbf{H1--H4}} be satisfied, and
  for every $\ep>0$ let
  \begin{equation}\label{ip_u0_ep}
      u_{0,\ep} \in L^2(\Omega)\,, \qquad
      \hat\gamma(u_{0,\ep})\in L^1(\Omega)\,, \qquad
      E_\ep(u_{0,\ep})<+\infty\,, \qquad
      (u_{0,\ep})_\Omega \in \operatorname{Int}D(\gamma)\,.
  \end{equation}
  Then, there exists $\ep_0>0$ having the following property: for every $\ep<\ep_0$ there exists a unique 
  solution $(u_\ep, \mu_\ep, \xi_\ep)$ to \eqref{eq1:NL}--\eqref{eq3:NL}
  associated with the initial datum $u_{0,\ep}$, according to Definition \ref{def:wsnl-ad}.
  Furthermore, if $(\beta^1,u_{0,\ep}^1)$ and $(\beta^2,
  u_{0,\ep}^2)$ are two sets of data satisfying
  \emph{\textbf{H4}} and \eqref{ip_u0_ep}, with 
  $(u_{0,\ep}^1)_\Omega=(u_{0,\ep}^2)_\Omega$, then
  there exists a positive constant $M_\ep$,
  depending only on the setting
  \emph{\textbf{H1}}--\emph{\textbf{H3}} and on the norms of the  data $(\beta^1,u_{0,\ep}^1)$ and $(\beta^2,
  u_{0,\ep}^2)$ appearing in \emph{\textbf{H4}}
  and \eqref{ip_u0_ep}, such that,
  for any respective solution
  $(u_\ep^1,\mu_\ep^1,\xi_\ep^1)$ and 
  $(u_\ep^2,\mu_\ep^2,\xi_\ep^2)$
  to the nonlocal equation \eqref{eq1:NL}--\eqref{eq3:NL},
  \begin{align*}
  \|u_\ep^1-u_\ep^2\|_{C^0([0,T]; (H^1(\Omega))^*)}^2
  &+\|E_\ep(u_\ep^1-u_\ep^2)\|_{L^1(0,T)}\\
  &\leq M_\ep
  \left(
  \|u_{0,\ep}^1-u_{0,\ep}^2\|_{(H^1(\Omega))^*}^2
  +\|\beta^1-\beta^2\|_{L^2(0,T; L^3(\Omega))}^2\right)\,.
  \end{align*}
\end{theorem}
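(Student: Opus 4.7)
The plan is to follow the three-step scheme outlined by the authors — approximation, uniform estimates, and passage to the limit — and then to establish the continuous-dependence estimate (which also gives uniqueness) by a separate energy argument. First I would introduce the Yosida regularization $\gamma_\lambda:\mathbb{R}\to\mathbb{R}$ of $\gamma$ (single-valued, $\frac{1}{\lambda}$-Lipschitz and monotone) and solve the regularized problem
\[
\partial_t u_\lambda - \Delta \mu_\lambda + \divergence(\beta u_\lambda)=0,\qquad \mu_\lambda = \bB_\ep(u_\lambda) + \gamma_\lambda(u_\lambda) + \Pi(u_\lambda),\qquad u_\lambda(0)=u_{0,\ep}.
\]
Since Lemma \ref{lem:B_H1} gives $\bB_\ep\in\mathscr{L}(H^1(\Omega),(H^1(\Omega))^*)$ and $\gamma_\lambda+\Pi$ is globally Lipschitz, existence of $u_\lambda$ on $[0,T]$ follows from a Banach fixed-point argument applied on short intervals (iterated), formulated after splitting $u_\lambda$ into mean and zero-mean parts, inverting $-\Delta$ on the latter, and using conservation of mass on the torus for the former.

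I would then derive $\lambda$-uniform estimates. Testing the equation against $\mu_\lambda$ and using $DE_\ep=\bB_\ep$ yields the gradient-flow identity, and after absorbing $|\int\beta u_\lambda\cdot\nabla\mu_\lambda|$ via Young (using $\beta\in L^2(0,T;L^\infty)$) one obtains uniform bounds for $E_\ep(u_\lambda)$, $\|\hat\gamma_\lambda(u_\lambda)\|_{L^1}$, $\|u_\lambda\|_{L^2}$ and $\|\nabla\mu_\lambda\|_{L^2(0,T;L^2)}$. The delicate point is controlling the mean of $\mu_\lambda$: since mass conservation on the torus gives $(u_\lambda(t))_\Omega=m:=(u_{0,\ep})_\Omega$, and since $\int_\Omega\bB_\ep(u_\lambda)\,\xd x=0$ by kernel symmetry, $(\mu_\lambda)_\Omega$ coincides with the average of $\gamma_\lambda(u_\lambda)+\Pi(u_\lambda)$. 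Testing $\mu_\lambda-(\mu_\lambda)_\Omega$ against the zero-mean function $u_\lambda-m$ and applying Poincaré--Wirtinger bounds $\int\gamma_\lambda(u_\lambda)(u_\lambda-m)$ by $\|\nabla\mu_\lambda\|_{L^2}\|u_\lambda-m\|_{L^2}$ up to lower-order terms; invoking $m\in\operatorname{Int}D(\gamma)$ together with the classical convex-analytic lower bound $\gamma_\lambda(r)(r-m)\geq c_1|\gamma_\lambda(r)|-c_2$ (uniform in $\lambda$) yields uniform $L^1$-control on $\gamma_\lambda(u_\lambda)$, hence on $|(\mu_\lambda)_\Omega|$, closing an $L^2(0,T;H^1)$-estimate on $\mu_\lambda$ and, by comparison in the equation, an $H^1(0,T;(H^1(\Omega))^*)$-estimate on $u_\lambda$. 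Aubin--Lions at fixed $\ep$ then supplies strong convergence $u_\lambda\to u_\ep$ in $L^2(0,T;L^2(\Omega))$, linear terms pass to the limit trivially, and $\xi_\ep\in\gamma(u_\ep)$ is recovered via Minty's maximal monotonicity trick.

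For the continuous-dependence/uniqueness part, let $w:=u_\ep^1-u_\ep^2$; by hypothesis on the means and mass conservation, $(w(t))_\Omega=0$ for all $t$, so $N:=(-\Delta)^{-1}w\in H^1(\Omega)$ is well defined. Testing the difference of the weak formulations in Definition \ref{def:wsnl-ad} against $N$ yields
\[
\tfrac12\tfrac{d}{dt}\|w\|_{(H^1(\Omega))^*}^2 + 2E_\ep(w) + \int_\Omega(\xi_\ep^1-\xi_\ep^2)w\,\xd x + \int_\Omega(\Pi(u_\ep^1)-\Pi(u_\ep^2))w\,\xd x = \int_\Omega(\beta^1 u_\ep^1-\beta^2 u_\ep^2)\cdot\nabla N\,\xd x.
\]
Monotonicity of $\gamma$ makes the $\xi$-term nonnegative, Lipschitz continuity of $\Pi$ bounds the fourth term by $C_\Pi\|w\|_{L^2}^2$, and I would split the right-hand side into $\int_\Omega\beta^1 w\cdot\nabla N\,\xd x$ (controlled by $\|\beta^1\|_{L^\infty}\|w\|_{L^2}\|w\|_{(H^1(\Omega))^*}$) and $\int_\Omega(\beta^1-\beta^2)u_\ep^2\cdot\nabla N\,\xd x$ (controlled via Hölder in $L^3\cdot L^6\cdot L^2$, exploiting $H^1(\Omega)\hookrightarrow L^6(\Omega)$ for $d\leq 3$ and the $L^2(0,T;H^1)$-bound on $u_\ep^2$ from Theorem \ref{th1}). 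The $\|w\|_{L^2}^2$ contributions are absorbed by the fixed-$\ep$ interpolation $\|w\|_{L^2}^2\leq \delta E_\ep(w)+C_{\delta,\ep}\|w\|_{(H^1(\Omega))^*}^2$ (a fixed-$\ep$ analogue of part (2) of Lemma \ref{lemma:delta-est}, provable either by the same contradiction scheme or directly from a nonlocal Poincaré inequality plus interpolation). Choosing $\delta$ small and applying Gronwall yields the stated estimate with the $\ep$-dependent constant $M_\ep$; uniqueness then follows by setting $\beta^1=\beta^2$ and $u_{0,\ep}^1=u_{0,\ep}^2$.

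The main obstacle is the uniform-in-$\lambda$ bound on the mean of $\mu_\lambda$: because $\bB_\ep$ lacks the structural cancellations of a differential operator, no direct test function produces an $H^1$-bound on $\mu_\lambda$, and one has to exploit the interior-of-domain condition $(u_{0,\ep})_\Omega\in\operatorname{Int}D(\gamma)$ via the convex-analytic lower bound described above. A secondary difficulty is the passage to the limit in the nonlinear term $\gamma_\lambda(u_\lambda)$, handled by Minty's argument once strong $L^2$-compactness of $u_\lambda$ is in hand through the $H^1(0,T;(H^1)^*)\cap L^2(0,T;H^1)$ bound combined with Aubin--Lions at fixed $\ep$.
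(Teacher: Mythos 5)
Your continuous-dependence argument and your treatment of the mean of $\mu$ (via mass conservation, the vanishing mean of $\bB_\ep$, and the convex-analytic bound $\gamma_\lambda(r)(r-m)\geq c_1|\gamma_\lambda(r)|-c_2$ coming from $(u_{0,\ep})_\Omega\in\operatorname{Int}D(\gamma)$) coincide with the paper's. However, your approximation step has a genuine gap. You regularize only $\gamma$ and keep $\mu_\lambda=\bB_\ep(u_\lambda)+\gamma_\lambda(u_\lambda)+\Pi(u_\lambda)$, treating $\bB_\ep$ as if it were a bounded perturbation. It is not: for the singular kernel \eqref{eq:kernel}, $\bB_\ep$ is an unbounded operator on $L^2(\Omega)$ whose domain need not contain $H^1(\Omega)$, and it only maps $H^1(\Omega)$ into $(H^1(\Omega))^*$ (Lemma~\ref{lem:B_H1}) — this is precisely the difficulty the paper is built around. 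Consequently, for a candidate $u_\lambda\in L^2(0,T;H^1(\Omega))$ you cannot even give meaning to $\nabla\mu_\lambda\in L^2$, and the Banach fixed point you sketch (inverting $-\Delta$ on the zero-mean part) does not close: the map $u\mapsto\bB_\ep(u)$ loses regularity, so Picard iteration in $C^0([0,T_0];(H^1)^*)$, $L^2(0,T_0;L^2)$ or $L^2(0,T_0;H^1)$ all fail. The paper's approximate problem \eqref{1_app}--\eqref{2_app} contains the additional viscous term $-\lambda\Delta u_\ep^\lambda$ (together with $H^1$-regularized initial data and a truncated velocity $\beta_\lambda$); this forces approximate solutions into $L^2(0,T;H^2(\Omega))$, where Lemma~\ref{lem:dom_B} ($H^s(\Omega)\subset D(\bB_\ep)$ for $s>3/2$) turns $\bB_\ep(u_\ep^\lambda)$ into an honest $L^2$ function, and the contraction is run in $L^2(0,T_0;H^s(\Omega))$, $s\in(3/2,2)$, by interpolating between $L^\infty(L^2)$ and $L^2(H^2)$. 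Some such regularization is indispensable, not a convenience.

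A second gap: your list of uniform estimates never produces the $L^2(0,T;H^1(\Omega))$ bound on $u_\ep^\lambda$, yet you use it in three places — it is part of Definition~\ref{def:wsnl-ad}, it is what makes Aubin--Lions give strong convergence in $L^2(0,T;L^2(\Omega))$ (your bounds $L^\infty(L^2)\cap H^1((H^1)^*)$ only yield compactness in $C^0([0,T];(H^1)^*)$, which is not enough for the Minty/strong-weak-closure identification of $\xi_\ep\in\gamma(u_\ep)$), and you invoke $\|u_\ep^2\|_{L^2(0,T;L^6)}$ in the continuous-dependence step. This bound cannot be extracted from $E_\ep(u_\lambda)$ at fixed $\ep$, since $E_\ep$ controls $\|\nabla\cdot\|_{L^2}^2$ only asymptotically as $\ep\to0$. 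The paper obtains it by a second, separate energy estimate: testing \eqref{1_app} with $u_\ep^\lambda$ and \eqref{2_app} with $-\Delta u_\ep^\lambda$, using the periodicity-dependent identity $\int_\Omega\nabla\bB_\ep(u)\cdot\nabla u=\tfrac12\int_\Omega\int_\Omega K_\ep(x,y)|\nabla u(x)-\nabla u(y)|^2\,\xd x\,\xd y$, and then Ponce's Poincar\'e inequality applied to $\nabla u_\ep^\lambda$ (whose mean vanishes on the torus) to convert the double integral into $\|\nabla u_\ep^\lambda\|_{L^2}^2$; this is the step the introduction singles out as delicate, and it also fixes the threshold $\ep_0$ via Lemma~\ref{lemma:delta-est}. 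Your proposal should be amended to include both the viscous regularization and this second estimate.
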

The second result concerns nonlocal-to-local convergence.
\begin{theorem}\label{th2}
Let assumptions \emph{\textbf{H1--H4}} be satisfied. Let 
$u_0\in H^1(\Omega)$, and
for every $\ep>0$ let $u_{0,\ep}$ satisfy \eqref{ip_u0_ep} and be such that
\begin{gather}
    \label{ip_u0_ep_unif}
    \sup_{\ep\in(0,\ep_0)}\left(
    \|u_{0,\ep}\|_{L^2(\Omega)}^2 + \|\hat\gamma(u_{0,\ep})\|_{L^1(\Omega)}
    +E_\ep(u_{0,\ep})\right) <+\infty\,,\\
    \label{ip_u0_ep_unif2}
    \exists\,[a_0,b_0]\subset\operatorname{Int}D(\gamma):\quad
    a_0\leq(u_{0,\ep})_\Omega\leq b_0 \quad\forall\,\ep\in(0,\ep_0)\,,\\
    \label{ip_u0_ep_unif3}
    u_{0,\ep}\rightharpoonup u_0 \quad\text{in } L^2(\Omega) 
    \quad\text{as } \ep\to0^+\,.
\end{gather}
Let $(u_\ep,\mu_\ep,\xi_\ep)$
be the unique solution to 
\eqref{eq1:NL}--\eqref{eq3:NL} 
associated to $u_{0,\ep}$ given by Theorem~\ref{th1},
and let $(u,\mu,\xi)$ be the unique solution to the local equation \eqref{eq1:L}--\eqref{eq3:L}
associated to $u_0$, according to Definition \ref{def:wsl-ad}.

Then, as $\ep\searrow0$,
\begin{align*}
    u_\ep \to u \qquad&\text{strongly in } 
    C^0([0,T]; L^2(\Omega))\cap L^2(0,T; H^1(\Omega))\,,\\ 
    \partial_t u_\ep \rightharpoonup \partial_t u \qquad&\text{weakly* in }
    L^2(0,T;(H^1(\Omega))^*)\,,\\
    \mu_\ep \rightharpoonup \mu \qquad&\text{weakly in }
    L^2(0,T; H^1(\Omega))\,,\\
    \xi_\ep \rightharpoonup \xi \qquad&\text{weakly in }
    L^2(0,T; L^2(\Omega))\,.
\end{align*}
\end{theorem}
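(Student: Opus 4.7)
The plan is a compactness-plus-passage-to-the-limit argument, built on uniform-in-$\ep$ counterparts of the a priori estimates developed for Theorem~\ref{th1}, together with the generalized Aubin--Lions inequality of Lemma~\ref{lemma:delta-est} and the identification of the nonlocal operator in the local limit supplied by Lemma~\ref{lem:B_H1_2}; uniqueness for the local problem will then upgrade subsequential convergence to convergence of the full family as $\ep\searrow0$. The uniform estimates come from testing \eqref{eq1:NL} against $\mu_\ep$ and exploiting the fact that $\mathbf{B}_\ep=DE_\ep$ (Lemma~\ref{lem:B_H1}), which yields the energy inequality
\[
E_\ep(u_\ep(t))+\int_\Omega\bigl[\hat\gamma(u_\ep(t))+\hat\Pi(u_\ep(t))\bigr]\xd x+\int_0^t\|\nabla\mu_\ep(s)\|_{L^2(\Omega)}^2\,\xd s\leq E_\ep(u_{0,\ep})+\int_\Omega\bigl[\hat\gamma(u_{0,\ep})+\hat\Pi(u_{0,\ep})\bigr]\xd x+\mathcal R_\ep(t),
\]
with $\mathcal R_\ep$ a convection remainder absorbed by Gr\"onwall using \textbf{H4}. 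By \eqref{ip_u0_ep_unif} this provides uniform bounds on $u_\ep$ in $L^\infty(0,T;L^2(\Omega))$, on $E_\ep(u_\ep)$ in $L^\infty(0,T)$, and on $\nabla\mu_\ep$ in $L^2((0,T)\times\Omega;\mathbb{R}^d)$. A Kenmochi-type test against $u_\ep-(u_\ep)_\Omega$, combined with the uniform interiority \eqref{ip_u0_ep_unif2}, produces a uniform $L^1$-bound on $\xi_\ep$ in spacetime and then, via Poincar\'e, uniform bounds on $\mu_\ep$ in $L^2(0,T;H^1(\Omega))$, on $\xi_\ep$ in $L^2(0,T;L^2(\Omega))$ and, from the equation, on $\partial_t u_\ep$ in $L^2(0,T;(H^1(\Omega))^*)$. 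Consequently $\mathbf{B}_\ep(u_\ep)=\mu_\ep-\xi_\ep-\Pi(u_\ep)$ is uniformly bounded in $L^2(0,T;L^2(\Omega))$.

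Weak/weak-$*$ compactness yields, along a (not relabeled) subsequence, $\partial_t u_\ep\rightharpoonup\partial_t u$ in $L^2(0,T;(H^1(\Omega))^*)$, $\mu_\ep\rightharpoonup\mu$ in $L^2(0,T;H^1(\Omega))$, and $\xi_\ep,\mathbf{B}_\ep(u_\ep)$ weakly convergent in $L^2(0,T;L^2(\Omega))$ to limits $\xi$ and $\chi$. For strong $L^2(0,T;L^2(\Omega))$-convergence of $u_\ep$, I apply \eqref{eq:comp2} time-pointwise to $u_{\ep_1}(t)-u_{\ep_2}(t)$ and integrate: the first two summands are absorbed by the $L^\infty(0,T)$-bound on $E_\ep(u_\ep)$, while the last one is controlled by Cauchy-ness of $u_\ep$ in $L^2(0,T;(H^1(\Omega))^*)$, which follows from the classical Aubin--Lions lemma exploiting the compactness of $L^2(\Omega)\hookrightarrow(H^1(\Omega))^*$. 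An analogous pointwise application of \eqref{eq:comp2} combined with the $1/2$-H\"older equicontinuity in $(H^1(\Omega))^*$ deriving from $\partial_t u_\ep\in L^2(0,T;(H^1(\Omega))^*)$ upgrades the convergence to $C^0([0,T];L^2(\Omega))$, and BBM lower semicontinuity gives $u\in L^\infty(0,T;H^1(\Omega))$.

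To pass to the limit in \eqref{weak_NL-ad}, I invoke Lemma~\ref{lem:B_H1_2}: applied time-pointwise and combined with the uniform $L^2((0,T)\times\Omega)$-bound on $\mathbf{B}_\ep(u_\ep)$, it identifies $\chi=-\Delta u$, so that $u\in L^2(0,T;H^2(\Omega))$ and \eqref{weak-ad} holds. The inclusion $\xi\in\gamma(u)$ a.e.\ follows from maximal monotonicity of $\gamma$, the strong $L^2$-convergence of $u_\ep$, the weak $L^2$-convergence of $\xi_\ep$, and the $\limsup$-inequality $\limsup_\ep\int_0^T\!\!\int_\Omega\xi_\ep u_\ep\,\xd x\,\xd t\leq\int_0^T\!\!\int_\Omega\xi u\,\xd x\,\xd t$, derived by testing \eqref{eq2:NL} against $u_\ep$ and exploiting weak lower semicontinuity of $E_\ep$ together with the convergences already established. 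Hence $(u,\mu,\xi)$ solves \eqref{eq1:L}--\eqref{eq3:L}, and uniqueness of the local solution promotes subsequential convergence to convergence along the entire family.

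The main obstacle is the strong $L^2(0,T;H^1(\Omega))$ convergence of $u_\ep$. The plan is to compare the nonlocal and local energy identities integrated on $[0,T]$: taking $\limsup_\ep$ in the former and $\liminf$ in the latter, using the strong $L^2(L^2)$-convergence of $u_\ep$ for the convection term, the weak convergences of $\mu_\ep$ and $\xi_\ep$, and the initial-energy bound from \eqref{ip_u0_ep_unif}, I arrive at
\[
\limsup_{\ep\to0^+}\int_0^T E_\ep(u_\ep(t))\,\xd t\leq\int_0^T\tfrac12\|\nabla u(t)\|_{L^2(\Omega)}^2\,\xd t,
\]
while the matching $\liminf$-inequality comes from \eqref{eq:BBM} and Fatou. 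The ensuing convergence of the BBM energies, together with the strong $L^2(0,T;L^2(\Omega))$-convergence of $u_\ep$, delivers the sought strong $L^2(0,T;H^1(\Omega))$-convergence via a Radon--Riesz type argument for singular kernels in the spirit of \cite{ponce04}.
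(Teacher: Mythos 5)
There is a genuine gap in the step you flag as ``the main obstacle'', namely the strong convergence $u_\ep\to u$ in $L^2(0,T;H^1(\Omega))$. Your mechanism --- convergence of the energies $\int_0^T E_\ep(u_\ep)\,\xd t\to\frac12\int_0^T\|\nabla u\|_{L^2(\Omega)}^2\,\xd t$ plus strong $L^2(L^2)$ convergence, upgraded by a ``Radon--Riesz type argument'' --- does not work, because $E_\ep$ does not control the $H^1$-seminorm from below uniformly in $\ep$. Ponce's uniform Poincar\'e inequality bounds $\|v-(v)_\Omega\|_{L^2(\Omega)}^2$ by $E_\ep(v)$, not $\|\nabla v\|_{L^2(\Omega)}^2$; a function oscillating at a scale much smaller than $\ep$ has tiny $E_\ep$ but huge gradient, so a uniform bound on $E_\ep(u_\ep)$ (which is all your energy estimate provides) does not even yield a uniform $L^2(0,T;H^1(\Omega))$ bound on $u_\ep$, let alone compactness of the gradients. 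Likewise, Ponce's compactness theorem gives relative compactness in $L^2(\Omega)$ of a family with bounded BBM energy --- applied to $u_\ep$ this only recovers the strong $L^2(L^2)$ convergence you already have.

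What is missing is the second a priori estimate of the paper: testing \eqref{eq1:NL} with $u_\ep$ and \eqref{eq2:NL} with $-\Delta u_\ep$, and exploiting the periodic boundary conditions to write $\int_\Omega\nabla{\bf B}_\ep(u_\ep)\cdot\nabla u_\ep=\frac12\int_\Omega\int_\Omega K_\ep(x,y)|\nabla u_\ep(x)-\nabla u_\ep(y)|^2\,\xd x\,\xd y$. This produces the uniform $L^1(0,T)$ bound on the BBM energy \emph{of the gradient} (estimate \eqref{eq:add19}) and, via \cite[Theorem 1.1]{ponce04}, the uniform $L^2(0,T;H^1(\Omega))$ bound on $u_\ep$. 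Only with that gradient-level bound can one invoke inequality \eqref{eq:comp1} of Lemma~\ref{lemma:delta-est} to convert strong $L^2(L^2)$ convergence into strong $L^2(H^1)$ convergence (and \eqref{eq:comp2} for $C^0([0,T];L^2(\Omega))$). A secondary, more easily repaired imprecision: Lemma~\ref{lem:B_H1_2} concerns ${\bf B}_\ep(v)$ for a \emph{fixed} $v\in H^1(\Omega)$, so it cannot be applied ``time-pointwise'' to the moving family $u_\ep$ to identify $\chi=-\Delta u$; the paper instead uses the convexity inequality $E_\ep(u_\ep)+\langle{\bf B}_\ep(u_\ep),z-u_\ep\rangle\le E_\ep(z)$ together with the convergence of $\int_0^TE_\ep(u_\ep)$, and you would need either that argument or a symmetry-based duality $\int{\bf B}_\ep(u_\ep)\psi=\langle{\bf B}_\ep(\psi),u_\ep\rangle$ with smooth $\psi$. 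The remaining steps (uniform estimates inherited from the proof of Theorem~\ref{th1}, identification of $\xi\in\gamma(u)$, convergence of the convection term) are in line with the paper.
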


The last two results that we present deal with regularity 
of solutions to the nonlocal equation. In particular, we show
that if the data are more regular, then the 
solution to the nonlocal equation inherits a further 
regularity, and the convergences to the local equation
are obtained in stronger topologies.

\begin{theorem}
  \label{th3}
  Let assumptions \emph{\textbf{H1}--\textbf{H4}} be satisfied,
  and suppose also that
  \begin{equation}
      \label{beta_reg}
      \beta\in H^1(0,T; L^3(\Omega;\mathbb{R}^d))\,.
  \end{equation}
  For every $0<\ep<\ep_0$ let $u_{0,\ep}$ satisfy \eqref{ip_u0_ep} and
  \begin{equation}
      \label{ip_u0_ep_reg}
      u_{0,\ep}\in L^6(\Omega)\,,\qquad
      {\bf B}_\ep(u_{0,\ep}) + \xi_{0,\ep}+\Pi(u_{0,\ep}) \in H^1(\Omega) 
      \quad\forall\,\xi_{0,\ep}\in\gamma(u_{0,\ep})\,.
  \end{equation}
  Then the unique solution $(u_\ep,\mu_\ep, \xi_\ep)$
  to the nonlocal equation \eqref{eq1:NL}--\eqref{eq3:NL}
  with respect to the initial datum $u_{0,\ep}$ also satisfies
  \[
      u_\ep \in W^{1,\infty}(0,T; (H^1(\Omega))^*)\cap
      H^1(0,T; L^2(\Omega))\cap L^2(0,T; H^1(\Omega))\,.
  \]
  If also
  \begin{equation}
      \label{beta_reg2}
      \beta \in L^\infty(0,T; L^\infty(\Omega;\mathbb{R}^d))\,,
  \end{equation}
  then in addition
  \[
  \mu_\ep \in L^\infty(0,T; H^1(\Omega))\,,\qquad
      \xi_\ep \in L^\infty(0,T; L^2(\Omega))\,.
  \]
  If also
  \begin{equation}
      \label{beta_reg3}
      \operatorname{div}\beta \in L^\infty(0,T; L^3(\Omega))\,,
  \end{equation}
  then in addition
  \[
  \mu_\ep \in L^2(0,T; H^2(\Omega))\,.
  \]
\end{theorem}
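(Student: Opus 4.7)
The plan is to derive the additional regularity by performing a formal time-differentiation of the nonlocal equation and testing the result with $(-\Delta)^{-1}\partial_t u_\ep$, exploiting the monotonicity of $\gamma$ to discard the singular nonlinearity. Let $v_\ep:=\partial_t u_\ep$: conservation of mean in \eqref{eq1:NL} gives $(v_\ep)_\Omega=0$, so $\phi_\ep:=(-\Delta)^{-1}v_\ep\in H^1(\Omega)$ is a legitimate test function. Differentiating \eqref{weak_NL-ad} formally in time and testing with $\phi_\ep$ yields
\[
  \tfrac12\tfrac{d}{dt}\|v_\ep\|_{(H^1(\Omega))^*}^2 + 2E_\ep(v_\ep) + \int_\Omega \partial_t\xi_\ep\,v_\ep\,\xd x + \int_\Omega\Pi'(u_\ep)v_\ep^2\,\xd x = \int_\Omega(\partial_t\beta\,u_\ep + \beta v_\ep)\cdot\nabla\phi_\ep\,\xd x,
\]
where the first three terms on the LHS are nonnegative (by symmetry of $K_\ep$ and monotonicity of $\gamma$, interpreted via the Yosida regularization at the approximation level), and the fourth is controlled by $C_\Pi\|v_\ep\|_{L^2}^2$ thanks to \textbf{H3}.

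H\"older's inequality bounds the RHS by $\|\partial_t\beta\|_{L^3}\|u_\ep\|_{L^6}\|v_\ep\|_{(H^1(\Omega))^*} + \|\beta\|_{L^\infty}\|v_\ep\|_{L^2}\|v_\ep\|_{(H^1(\Omega))^*}$. A specialisation of Lemma~\ref{lemma:delta-est}(2), obtained by taking one family-member equal to zero, produces the uniform interpolation $\|v_\ep\|_{L^2}^2\leq\delta E_\ep(v_\ep)+C_\delta\|v_\ep\|_{(H^1(\Omega))^*}^2$ for $\ep<\ep_\delta$; choosing $\delta$ small, this absorbs both the $\Pi$-contribution and the delicate $\beta v_\ep$-contribution into $2E_\ep(v_\ep)$, leaving Gronwall-type forcings that are $L^1(0,T)$-integrable thanks to \eqref{beta_reg}, the \textbf{H4}-bound $\beta\in L^2(0,T;L^\infty)$, and the $L^\infty(0,T;L^6(\Omega))$ control on $u_\ep$ inherited from Theorem~\ref{th1} (via the energy estimate and the Sobolev embedding $H^1\hookrightarrow L^6$). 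For the initial datum, \eqref{eq1:NL} at $t=0$ gives $v_\ep(0)=\Delta\mu_\ep(0)-\operatorname{div}(\beta(0)u_{0,\ep})$, hence $\|v_\ep(0)\|_{(H^1(\Omega))^*}\leq\|\nabla\mu_\ep(0)\|_{L^2}+\|\beta(0)\|_{L^3}\|u_{0,\ep}\|_{L^6}$, both finite by \eqref{ip_u0_ep_reg} and the embedding $H^1(0,T;L^3)\hookrightarrow C^0([0,T];L^3)$. Gronwall's lemma then produces $v_\ep\in L^\infty(0,T;(H^1(\Omega))^*)$ with $E_\ep(v_\ep)\in L^1(0,T)$, whence $v_\ep\in L^2(0,T;L^2(\Omega))$ via the same interpolation, yielding the first claim.

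Assume now \eqref{beta_reg2}. Then $\beta u_\ep\in L^\infty(0,T;L^2(\Omega))$, so rewriting \eqref{eq1:NL} as $-\Delta\mu_\ep=-\partial_tu_\ep-\operatorname{div}(\beta u_\ep)$ places the right-hand side in $L^\infty(0,T;(H^1(\Omega))^*)$; together with a uniform-in-$t$ bound on $(\mu_\ep)_\Omega=(\xi_\ep)_\Omega+(\Pi(u_\ep))_\Omega$ (obtained from standard coercivity arguments on $\gamma$ applied to \eqref{eq2:NL} after testing by $u_\ep-(u_{0,\ep})_\Omega$), elliptic regularity delivers $\mu_\ep\in L^\infty(0,T;H^1(\Omega))$. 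To promote $\xi_\ep$ to $L^\infty(L^2)$, we first control $\bB_\ep(u_\ep)$ in $L^2$ pointwise in time: multiplying \eqref{eq2:NL} by $\bB_\ep(u_\ep)$ and using that
\[
  \int_\Omega \xi_\ep\,\bB_\ep(u_\ep)\,\xd x = \tfrac12\int_\Omega\int_\Omega K_\ep(x,y)\bigl(u_\ep(x)-u_\ep(y)\bigr)\bigl(\xi_\ep(x)-\xi_\ep(y)\bigr)\,\xd y\,\xd x \geq 0
\]
by monotonicity of $\gamma$, we obtain $\|\bB_\ep(u_\ep)\|_{L^2}\leq\|\mu_\ep\|_{L^2}+\|\Pi(u_\ep)\|_{L^2}$ pointwise, and hence $\xi_\ep=\mu_\ep-\bB_\ep(u_\ep)-\Pi(u_\ep)\in L^\infty(0,T;L^2(\Omega))$. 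Finally, under \eqref{beta_reg3} we expand $\operatorname{div}(\beta u_\ep)=(\operatorname{div}\beta)u_\ep+\beta\cdot\nabla u_\ep\in L^2(0,T;L^2(\Omega))$ (using $u_\ep\in L^\infty(L^6)$, $\operatorname{div}\beta\in L^\infty(L^3)$, $\beta\in L^\infty(L^\infty)$, $\nabla u_\ep\in L^2(L^2)$), so $\Delta\mu_\ep\in L^2(L^2)$ and standard elliptic regularity yields $\mu_\ep\in L^2(0,T;H^2(\Omega))$.

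The main obstacle is the rigorous justification of the time-differentiation, since the solutions of Definition~\ref{def:wsnl-ad} are not a priori smooth in time. The way around this is to carry out the entire computation on the approximating scheme from the proof of Theorem~\ref{th1} (with Yosida-regularization $\gamma_\lambda$ and the ancillary regularizations of Subsection~\ref{subs:approx}, for which $\partial_tv_\ep^\lambda$ is a legitimate test function), to extract bounds that are uniform in the approximation parameter, and to pass to the limit. Assumption~\eqref{ip_u0_ep_reg} is tailored precisely so that $\mu_\ep^\lambda(0)=\bB_\ep(u_{0,\ep})+\gamma_\lambda(u_{0,\ep})+\Pi(u_{0,\ep})$ remains bounded in $H^1(\Omega)$ uniformly in $\lambda$ (via $\gamma_\lambda(u_{0,\ep})\to\xi_{0,\ep}$), which is exactly what is required to control $v_\ep^\lambda(0)$ in $(H^1(\Omega))^*$ and thereby close the Gronwall argument independently of $\lambda$.
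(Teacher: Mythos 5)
Your proposal follows essentially the same route as the paper: time-differentiation of the approximate (Yosida-regularized, viscous) system, testing with $(-\Delta)^{-1}\partial_t u_\ep^\lambda$ and $\partial_t u_\ep^\lambda$, discarding the monotone term, interpolating $\|\partial_t u_\ep\|_{L^2(\Omega)}^2$ via Lemma~\ref{lemma:delta-est}, controlling the initial value of $\partial_t u_\ep$ through \eqref{ip_u0_ep_reg}, and then upgrading $\mu_\ep$ and $\xi_\ep$ by comparison and the same monotonicity trick used in Steps 2--3 of the proof of Theorem~\ref{th1}. One small inaccuracy: Theorem~\ref{th1} does not provide $u_\ep\in L^\infty(0,T;L^6(\Omega))$ (only $L^\infty(0,T;L^2(\Omega))\cap L^2(0,T;H^1(\Omega))$), but the $L^2(0,T;L^6(\Omega))$ bound inherited from $L^2(0,T;H^1(\Omega))$ is all that is needed, both in the Gronwall forcing for the $\partial_t\beta$-term and in the expansion of $\operatorname{div}(\beta u_\ep)$, and this is exactly what the paper uses.
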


\begin{theorem}
  \label{th4}
  Let assumptions \emph{\textbf{H1--H4}} be satisfied. 
  Let $u_0\in H^1(\Omega)$, and
  for every $\ep>0$ let $u_{0,\ep}$ satisfy \eqref{ip_u0_ep}, \eqref{ip_u0_ep_unif}--\eqref{ip_u0_ep_unif3}, 
  \eqref{ip_u0_ep_reg}
  and
  \begin{equation}
      \label{ip_u0_ep_reg_unif}
      \sup_{\ep\in(0,\ep_0),\; \xi_{0,\ep}\in\gamma(u_{0,\ep})}
      \left(\|u_{0,\ep}\|_{L^6(\Omega)} +
      \|{\bf B}_\ep(u_{0,\ep}) + \xi_{0,\ep}+\Pi(u_{0,\ep}\|_{H^1(\Omega)}
      \right)<+\infty\,.
  \end{equation}
  Denoting by $(u,\mu,\xi)$ the unique solution to the local equation \eqref{eq1:L}--\eqref{eq3:L}, 
  if \eqref{beta_reg} holds
  then,
  in addition to the convergences in Theorem~\ref{th2},
  \begin{align*} 
    u_\ep \rightharpoonup u \qquad&\text{weakly* in }
    W^{1,\infty}(0,T;(H^1(\Omega))^*)\cap H^1(0,T; L^2(\Omega))\,.
  \end{align*}
  If also \eqref{beta_reg2} holds, then
  \begin{align*}
    \mu_\ep \rightharpoonup \mu \qquad&\text{weakly* in }
    L^\infty(0,T; H^1(\Omega))\,,\\
    \xi_\ep \rightharpoonup \xi \qquad&\text{weakly* in }
    L^\infty(0,T; L^2(\Omega))\,.
  \end{align*}
  If also \eqref{beta_reg3} holds, then
  \[
  \mu_\ep \rightharpoonup \mu \qquad\text{weakly in }
    L^2(0,T; H^2(\Omega))\,.
  \]
\end{theorem}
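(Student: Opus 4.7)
\emph{Strategy.} The plan is to revisit the a priori estimates underlying Theorem~\ref{th3} and verify that, thanks to the uniform hypothesis \eqref{ip_u0_ep_reg_unif} on the initial data together with the $\ep$-independent hypotheses \eqref{beta_reg}, \eqref{beta_reg2}, \eqref{beta_reg3} on the convection field $\beta$, each of those estimates yields a bound that is uniform in $\ep\in(0,\ep_0)$. Once this uniformity is in place, Banach--Alaoglu provides weakly-$*$ convergent subsequences in the corresponding spaces, and uniqueness of the local limit (ensured by Theorem~\ref{th2} and the $H^1$-regularity of $u_0$) forces the whole family to converge and identifies the limits with $(u,\mu,\xi)$.

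\emph{Detailed plan.} First, I would carefully inspect the proof of Theorem~\ref{th3}. The central estimate there is obtained by formally time-differentiating \eqref{eq1:NL}--\eqref{eq2:NL} (or, rigorously, by testing the equation satisfied by a finite difference quotient of $u_\ep$ against a suitable multiple of $\partial_t u_\ep$ or $\partial_t\mu_\ep$), and then exploiting monotonicity of $\gamma$, the Lipschitz character of $\Pi$, and convexity of $E_\ep$. The initial-time contribution that appears after such testing is exactly $\|{\bf B}_\ep(u_{0,\ep})+\xi_{0,\ep}+\Pi(u_{0,\ep})\|_{H^1(\Omega)}^2$, which is uniformly controlled by \eqref{ip_u0_ep_reg_unif}. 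The convection contribution involves $\partial_t\beta\cdot u_\ep$, whose relevant dual norm is bounded by $\|\partial_t\beta\|_{L^2(0,T;L^3(\Omega))}\|u_\ep\|_{L^\infty(0,T;L^6(\Omega))}$: the first factor is uniformly bounded by \eqref{beta_reg}, while the second is controlled via interpolation between the basic $L^2(0,T;H^1(\Omega))$ estimate from Theorem~\ref{th1} (uniform by \eqref{ip_u0_ep_unif}) and the $L^6$-bound on $u_{0,\ep}$ guaranteed by \eqref{ip_u0_ep_reg_unif}. The same inspection, under the additional hypotheses \eqref{beta_reg2} and \eqref{beta_reg3}, produces $\ep$-uniform bounds on $\mu_\ep$ in $L^\infty(0,T;H^1(\Omega))$ and $L^2(0,T;H^2(\Omega))$, respectively, and correspondingly on $\xi_\ep$ in $L^\infty(0,T;L^2(\Omega))$ via the identity $\xi_\ep=\mu_\ep - {\bf B}_\ep(u_\ep) - \Pi(u_\ep)$ combined with the uniform $E_\ep$-bound.

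\emph{Passage to the limit.} Once the uniform bounds
\[
\sup_{\ep}\bigl(\|u_\ep\|_{W^{1,\infty}(0,T;(H^1(\Omega))^*)} + \|u_\ep\|_{H^1(0,T;L^2(\Omega))}\bigr)<+\infty
\]
are in hand, together with the respective reinforced bounds on $\mu_\ep$ and $\xi_\ep$ under \eqref{beta_reg2} and \eqref{beta_reg3}, weak-$*$ compactness in each of the relevant spaces is immediate. By Theorem~\ref{th2} any such weak-$*$ limit point of $(u_\ep,\mu_\ep,\xi_\ep)$ already coincides, in the weaker topology, with the unique local solution $(u,\mu,\xi)$; the standard subsequence principle then upgrades the convergence to hold along the entire family $\ep\searrow 0$ in the stronger topologies.

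\emph{Main obstacle.} The principal difficulty is ensuring that the higher-order testing underlying Theorem~\ref{th3} carries through with constants independent of $\ep$. In particular, one must control the time-differentiated convection term ${\rm div}(\partial_t\beta\, u_\ep + \beta\, \partial_t u_\ep)$ tested against $\mu_\ep$ or $\partial_t u_\ep$, absorbing the piece involving $\partial_t u_\ep$ into the left-hand side uniformly, and exploiting the nonnegative sign of the term arising from $\partial_t\xi_\ep\cdot\partial_t u_\ep$ by monotonicity of $\gamma$. All absorbing constants produced in this process must depend only on the data bounds in \eqref{ip_u0_ep_reg_unif} and on the $\beta$-norms in \eqref{beta_reg}--\eqref{beta_reg3}, never on $\ep$. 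Once this book-keeping is verified, the remainder is a routine weak-compactness and uniqueness argument.
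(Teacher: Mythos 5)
Your proposal follows essentially the same route as the paper: the paper's proof of Theorem~\ref{th4} consists precisely in observing that, under \eqref{ip_u0_ep_reg_unif} and the uniform hypotheses on $\beta$, the constants $C_\ep$ in the regularity estimates of Theorem~\ref{th3} are uniformly bounded in $\ep$, after which the passage to the limit and the identification of the limits proceed exactly as in the proof of Theorem~\ref{th2}, with the subsequence principle upgrading the convergence to the whole family. The only minor bookkeeping difference is in the time-differentiated convection term: the paper bounds $\int_\Omega u_\ep^\lambda\,\partial_t\beta\cdot\nabla(-\Delta)^{-1}(\partial_t u_\ep^\lambda)$ directly by H\"older as $\|u_\ep^\lambda\|_{L^6}\|\partial_t\beta\|_{L^3}\|\partial_t u_\ep^\lambda\|_{(H^1(\Omega))^*}$ and closes with Young and Gronwall using only the $L^2(0,T;H^1(\Omega))$ bound on $u_\ep^\lambda$ (the $L^6$ assumption on $u_{0,\ep}$ enters instead in the initial-time elliptic estimate), rather than through the $L^\infty(0,T;L^6(\Omega))$ interpolation bound you sketch, which would not follow from the ingredients you cite.
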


\section{Proof of Theorem~\ref{th1}}
\label{proof1}

This section contains the proof of existence 
of a solution $(u_\ep,\mu_\ep,\xi_\ep)$
to the nonlocal convective Cahn-Hilliard equation.
We subdivide it in different steps.
In this section, $\ep>0$ is fixed.

\subsection{Approximation}
\label{subs:approx}
For every $\lambda>0$, let $\gamma_\lambda:\mathbb{R}\to
\mathbb{R}$ be the Yosida approximation of $\gamma$, having Lipschitz constant $1/\lambda$,
and set 
$\hat\gamma_\lambda(s):=\int_0^s\gamma_\lambda(r)\,\xd r$ for every $s\in \mathbb{R}$.
We consider the approximated problem
\begin{align}
    \label{1_app}
    \partial_t u_\ep^\lambda - \Delta \mu_\ep^\lambda
    +\operatorname{div}(\beta_\lambda u_\ep^\lambda) = 0
    \qquad&\text{in } (0,T)\times\Omega\,,\\
    \label{2_app}
    \mu_\ep^\lambda = -\lambda\Delta u_\ep^\lambda
    +{\bf B}_\ep(u_\ep^\lambda)
    +\gamma_\lambda(u_\ep^\lambda)
    +\Pi(u_\ep^\lambda)
    \qquad&\text{in } (0,T)\times\Omega\,,\\
    \label{3_app}
    u_\ep^\lambda(0)=u_{0,\ep}^\lambda \qquad&\text{in } \Omega\,,
\end{align}
where $\beta_\lambda:=P_\lambda \beta$,
$P_\lambda:\mathbb{R}^d\to\mathbb{R}^d$ is the projection
on the closed ball of radius $\frac1\lambda$, and the 
initial datum $u_{0,\ep}^\lambda$ satisfies
\begin{gather}
    \label{ip_u0_ep_lam}
    u_{0,\ep}^\lambda \in H^1(\Omega)\,, \qquad
    u_{0,\ep}^\lambda\to u_{0,\ep} \quad\text{in } L^2(\Omega)\,,\\
    \label{ip_u0_ep_lam2}
    \sup_{\lambda\in(0,\lambda_0)}\left(
    \lambda\|u_{0,\ep}^\lambda\|_{H^1(\Omega)}^2+
    \|\hat\gamma_\lambda(u_{0,\ep}^\lambda)\|_{L^1(\Omega)}
    +E_\ep(u_{0,\ep}^\lambda)\right)<+\infty
\end{gather}
for a certain $\lambda_0>0$
(possibly depending on $\ep$).

\begin{remark}
The existence of an approximating sequence $(u_{0,\ep}^\lambda)_\lambda$
satisfying \eqref{ip_u0_ep_lam}--\eqref{ip_u0_ep_lam2} is 
guaranteed by \eqref{ip_u0_ep}.
For example,
let us consider
the classical 
elliptic regularization given by the unique solution to 
the problem
\begin{equation}\label{elliptic}
  u_{0,\ep}^\lambda - \lambda\Delta u_{0,\ep}^\lambda = u_{0,\ep} \qquad\text{in } \Omega\,.
\end{equation}
Note that we have not specified any boundary 
conditions for $u_{0,\ep}^\lambda$
as we are working on the torus $\Omega$
(hence we have implicitly required 
periodic boundary conditions for $u_{0,\ep}^\lambda$).
Let us show that 
\eqref{ip_u0_ep_lam}--\eqref{ip_u0_ep_lam2}
are satisfied by this choice.
Testing \eqref{elliptic}
by $u_{0,\ep}^\lambda$ and using the 
Young inequality on the right-hand side
we obtain
\[
  \frac12\|u_{0,\ep}^\lambda\|_{L^2(\Omega)}^2
  +\lambda\|\nabla u_{0,\ep}^\lambda\|_{L^2(\Omega)}^2
  \leq \frac12\|u_{0,\ep}\|_{L^2(\Omega)}^2\,.
\]
This readily implies \eqref{ip_u0_ep_lam}
and the first bound in \eqref{ip_u0_ep_lam2}.
Moreover, testing \eqref{elliptic}
by $\gamma_\lambda(u_{0,\ep}^\lambda)$ we get
\[
  \int_\Omega\gamma_\lambda(u_{0,\ep}^\lambda(x))
  u_{0,\ep}^\lambda(x)\,\xd x
  +\lambda\int_\Omega\gamma_\lambda'(u_{0,\ep}^\lambda(x))
  |\nabla u_{0,\ep}^\lambda(x)|^2\,\xd x
  =\int_\Omega\gamma_\lambda(u_{0,\ep}^\lambda(x))
  u_{0,\ep}(x)\,\xd x\,.
\]
Denoting by $\hat\gamma_\lambda^*$ the convex
conjugate of $\hat\gamma_\lambda$,
the first term on the left-hand side reads as
\[
  \int_\Omega\gamma_\lambda(u_{0,\ep}^\lambda(x))
  u_{0,\ep}^\lambda(x)\,\xd x =
  \int_\Omega
  \hat\gamma_\lambda(u_{0,\ep}^\lambda(x))\,\xd x
  +\int_\Omega\hat\gamma_\lambda^*
  (\gamma_\lambda(u_{0,\ep}^\lambda(x)))\,\xd x\,,
\]
the second term on the left-hand side is nonnegative
by the monotonicity of $\gamma_\lambda$, while
the right-hand side can be bounded
through the Young inequality as
\begin{align*}
\int_\Omega\gamma_\lambda(u_{0,\ep}^\lambda(x))
  u_{0,\ep}(x)\,\xd x
  &\leq 
  \int_\Omega
  \hat\gamma_\lambda(u_{0,\ep}(x))\,\xd x
  +\int_\Omega\hat\gamma_\lambda^*
  (\gamma_\lambda(u_{0,\ep}^\lambda(x)))\,\xd x\\
  &\leq 
  \int_\Omega
  \hat\gamma(u_{0,\ep}(x))\,\xd x
  +\int_\Omega\hat\gamma_\lambda^*
  (\gamma_\lambda(u_{0,\ep}^\lambda(x)))\,\xd x\,.
\end{align*}
Rearranging the terms we get 
$\|\hat\gamma_\lambda(u_{0,\ep}^\lambda)\|_{L^1(\Omega)}
\leq\|\hat\gamma(u_{0,\ep})\|_{L^1(\Omega)}$,
from which the second bound in \eqref{ip_u0_ep_lam2}.
Finally,
testing \eqref{elliptic}
by $\bB_\ep(u_{0,\ep}^\lambda)$ 
we have
\[
  \int_\Omega \bB_\ep(u_{0,\ep}^\lambda)(x)
  u_{0,\ep}^\lambda(x)\,\xd x
  +\lambda\int_\Omega
  \nabla\bB_\ep(u_{0,\ep}^\lambda)(x)
  \cdot\nabla u_{0,\ep}^\lambda(x)\,\xd x=
  \int_\Omega u_{0,\ep}(x)
  \bB_\ep(u_{0,\ep}^\lambda)(x)\,\xd x\,,
\]
where, thanks to the periodic boundary conditions,
on the left-hand side we have
\[
\int_\Omega \bB_\ep(u_{0,\ep}^\lambda)(x)
u_{0,\ep}^\lambda(x)\,\xd x
=2 E_\ep(u_{0,\ep}^\lambda)
\]
and
\[
\lambda\int_\Omega\nabla\bB_\ep(u_{0,\ep}^\lambda)(x)
  \cdot\nabla u_{0,\ep}^\lambda(x)\,\xd x=
  \frac\lambda2\int_\Omega\int_\Omega
  K_\ep(x,y)|\nabla u_{0,\ep}^\lambda(x)
  -\nabla u_{0,\ep}^\lambda(y)|^2\,\xd x\,\xd y\,.
\]
On the right-hand side, 
by the H\"older and Young inequalities,
we have
\begin{align*}
\int_\Omega u_{0,\ep}(x)
\bB_\ep(u_{0,\ep}^\lambda)(x)\,\xd x&=
\frac12\int_\Omega\int_\Omega
K_\ep(x,y)
(u_{0,\ep}(x)-u_{0,\ep}(y))
(u_{0,\ep}^\lambda(x)-u_{0,\ep}^\lambda(y))
\,\xd x\,\xd y\\
&\leq\sqrt{2E_\ep(u_{0,\ep})}
\sqrt{2E_\ep(u_{0,\ep}^\lambda)}\leq
E_\ep(u_{0,\ep}) + E_\ep(u_{0,\ep}^\lambda)\,.
\end{align*}
Rearranging the terms we get 
$E_\ep(u_{0,\ep}^\lambda)\leq E_\ep(u_{0,\ep})$,
from which the third bound in \eqref{ip_u0_ep_lam2}.
\end{remark}

In this subsection, we show existence of an approximated solution $(u_\ep^\lambda,\mu_\ep^\lambda)$ for every $\lambda>0$ fixed. The proof strategy relies on the use of a fixed-point argument.
\\

For every $w \in L^2(0,T; H^s(\Omega))$ with $s\in \left(\frac{3}{2},2\right)$,
Lemma \ref{lem:dom_B} ensures that
\[
  {\bf B}_\ep(w) \in L^2(0,T; L^2(\Omega))\,,
\]
so that we can study the auxiliary problem
\begin{align}
    \label{eq:conv-ch1}\partial_t v - \Delta \mu_v
    +\operatorname{div}(\beta_\lambda v) = 0
    \qquad&\text{in } (0,T)\times\Omega\,,\\
    \label{eq:conv-ch2}\mu_v = 
    \lambda\partial_t v
    -\lambda\Delta v
    +{\bf B}_\ep(w)
    +\gamma_\lambda(v)
    +\Pi(v)
    \qquad&\text{in } (0,T)\times\Omega\,,\\
    \label{eq:conv-ch3}v(0)=u_{0,\ep}^\lambda \qquad&\text{in } \Omega\,,
\end{align}
which
can be seen as a local convective viscous 
Cahn-Hilliard equation with 
an additional source term in the definition of the 
chemical potential. It is well-known
(see \cite{col-gil-spr-CHconv} for example)
that such problem
admits a unique weak solution $(v,\mu_v)$ with
\[
  v \in H^1(0,T; L^2(\Omega))\cap 
  L^\infty(0,T; H^1(\Omega)) \cap
  L^2(0,T; H^2(\Omega))\,, \qquad
  \mu_v \in L^2(0,T; H^1(\Omega))\,,
\]
satisfying \eqref{eq:conv-ch1}--\eqref{eq:conv-ch3} 
for example in the sense of distributions.
Hence, the map
\[
  \Gamma_\ep^\lambda:
  L^2(0,T; H^s(\Omega)) \to H^1(0,T; L^2(\Omega))
  \cap 
  L^\infty(0,T; H^1(\Omega))
  \cap L^2(0,T; H^2(\Omega))\, 
\]
associating to every $w\in L^2(0,T; H^s(\Omega))$ the solution $v$ to \eqref{eq:conv-ch1}--\eqref{eq:conv-ch3}
is well-defined. We proceed by showing that $\Gamma_\ep^\lambda$
has also some continuity properties.
For $i=1,2$ let $w_i\in L^2(0,T; H^s(\Omega))$, and set
$v_i:=\Gamma_\ep^\lambda(w_i)$. 
Then taking the difference of the corresponding equations \eqref{eq:conv-ch1} and \eqref{eq:conv-ch2}
for $i=1,2$, we obtain
\begin{align}
    &\label{eq:conv-ch1-diff}\partial_t (v_1-v_2) - \Delta (\mu_{v_1}-\mu_{v_2})
    +\operatorname{div}(\beta_\lambda (v_1-v_2)) = 0
    \qquad&\text{in } (0,T)\times\Omega\,,\\
    &\notag\mu_{v_1}-\mu_{v_2} = 
    \lambda\partial_t(v_1-v_2)
    -\lambda\Delta (v_1-v_2)
    +
    {\bf B}_\ep(w_1-w_2)\\
    &\label{eq:conv-ch2-diff}\qquad\quad\qquad+\gamma_\lambda(v_1)-\gamma_\lambda(v_2)
    +\Pi(v_1)-\Pi(v_2)
    \qquad&\text{in } (0,T)\times\Omega\,,\\
    &\label{eq:conv-ch3-diff}v_1(0)-v_2(0)=0 \qquad&\text{in } \Omega\,.
\end{align}
Noting that $(v_1-v_2)_{\Omega}=0$ by integrating \eqref{eq:conv-ch1-diff}, testing \eqref{eq:conv-ch1-diff} by $(-\Delta)^{-1}(v_1-v_2)$, equation \eqref{eq:conv-ch2-diff} by $v_1-v_2$, and taking the difference, estimate \eqref{estB} and assumption {\bf H4} yield
\begin{align*}
  &\|v_1-v_2\|_{C^0([0,t]; (H^1(\Omega))^*)
  \cap L^2(0,T; H^1(\Omega))}^2\\
  &\qquad \leq C_{\ep,\lambda}\Big\{
  \int_0^t\|w_1(s,\cdot)-w_2(s,\cdot)\|_{H^1(\Omega)}^2\,\xd s
  +\int_0^t\|\gamma_\lambda(v_1(s,\cdot))
  -\gamma_\lambda(v_2(s,\cdot))\|^2_{L^2(\Omega)}\,\xd s\\
  &\qquad+\int_0^t\|\Pi(v_1(s,\cdot))
  -\Pi(v_2(s,\cdot))\|^2_{L^2(\Omega)}\,\xd s + \int_0^t
  \|v_1(s,\cdot)-v_2(s,\cdot)\|_{L^2(\Omega)}^2\,\xd s\,\Big\},
\end{align*}
for every $t\in[0,T]$.

Testing \eqref{eq:conv-ch1-diff} by $v_1-v_2$, equation \eqref{eq:conv-ch2-diff} by $-\Delta (v_1-v_2)$, taking the difference, and using Lemma \ref{lem:dom_B}, a similar argument yields 
\begin{align*}
&\|v_1-v_2\|^2_{C^0([0,t];L^2(\Omega))}
+\|\Delta (v_1-v_2)\|^2_{L^2(0,t;L^2(\Omega))}\\
&\quad\leq C_{\ep,\lambda}\Big\{  \int_0^t\|w_1(s,\cdot)-w_2(s,\cdot)\|_{H^s(\Omega)}^2\,\xd s\\
&\qquad+\int_0^t\|\gamma_\lambda(v_1(s,\cdot))-\gamma_\lambda(v_2(s,\cdot))+\Pi(v_1(s,\cdot))-\Pi(v_2(s,\cdot))\|^2_{L^2(\Omega)}\,\xd s\\
&\qquad +\int_0^t\|v_1(s,\cdot)-v_2(s,\cdot)\|^2_{H^1(\Omega)}\,\xd s\Big\}.
\end{align*}
To handle the last term on the right-hand
side we use the following compactness result:
since $H^2(\Omega)\hookrightarrow H^1(\Omega)$ compactly,
for every $\eta>0$ there is $C_\eta>0$ such that 
\begin{align*}
  &\int_0^t\|v_1(s,\cdot)-v_2(s,\cdot)\|^2_{H^1(\Omega)}\,\xd s\\
  &\leq
  \eta\int_0^t
  \|\Delta(v_1-v_2)(s,\cdot)\|^2_{L^2(\Omega)}\,\xd s
  +C_\eta
  \int_0^t
  \|v_1(s,\cdot)-v_2(s,\cdot)\|^2_{L^2(\Omega)}
  \,\xd s\,.
\end{align*}

Hence, summing the two inequalities, using the Lipschitz-continuity of $\gamma_\lambda$
and $\Pi$,
choosing $\eta>0$ sufficiently small, and
applying the Gronwall's Lemma, we deduce that there exists a positive constant $C_{\ep,\lambda}$ such that 
\begin{equation}
\label{eq:cont}
  \|v_1-v_2\|_{C^0([0,T]; L^2(\Omega))
  \cap L^2(0,T; H^2(\Omega))}
  \leq C_{\ep,\lambda}
  \|w_1-w_2\|_{L^2(0,T; H^s(\Omega))}\,.
\end{equation}
In particular, $\Gamma_\ep^\lambda$ is continuous 
from $L^2(0,T; H^s(\Omega))$ to $L^2(0,T; H^s(\Omega))$.

Fix $T_0>0$. By repeating the argument leading to \eqref{eq:cont} we deduce the estimate
\begin{equation*}
  \|v_1-v_2\|_{C^0([0,T_0]; L^2(\Omega))
  \cap L^2(0,T_0; H^2(\Omega))}
  \leq C_{\ep,\lambda}
  \|w_1-w_2\|_{L^2(0,T_0; H^s(\Omega))}\,,
\end{equation*}
for every $w\in L^2(0,T_0;H^s(\Omega))$, and $v=\Gamma_{\ep}^{\lambda}(w)$.
Now, since $s\in(\frac32,2)$, if $\vartheta\in(0,1)$ is such that
\[
  s=(1-\vartheta)\cdot0 + \vartheta\cdot 2\,, \quad\text{i.e.}\quad
  \vartheta:=\frac{s}2\in\left(\frac34,1\right)\,,
\]
by interpolation we get that
\[
  \|v_1(t,\cdot)-v_2(t,\cdot)\|_{H^s(\Omega)}\leq\|v_1(t,\cdot)-v_2(t,\cdot)\|_{H^2(\Omega)}^{s/2}
  \|v_1(t,\cdot)-v_2(t,\cdot)\|_{L^2(\Omega)}^{1-s/2} \qquad\text{a.e.~in } (0,T)\,,
\]
which in turn yields that 
\begin{align*}
  \|v_1-v_2\|_{L^{4/s}(0,T_0; H^s(\Omega))}&\leq
  \|v_1-v_2\|_{L^2(0,T_0;H^2(\Omega))}^{s/2}
  \|v_1-v_2\|_{L^\infty(0,T_0;L^2(\Omega))}^{1-s/2}\\
  &\leq\frac{s}2\|v_1-v_2\|_{L^2(0,T_0;H^2(\Omega))}+
  \left(1-\frac{s}2\right)\|v_1-v_2\|_{L^\infty(0,T_0;L^2(\Omega))}\\
  &\leq C_s\|v_1-v_2\|_{C^0([0,T_0]; L^2(\Omega))
  \cap L^2(0,T_0; H^2(\Omega))}\,.
\end{align*}
Consequently, we have that 
\[
   \|v_1-v_2\|_{L^{4/s}(0,T_0;H^s(\Omega))} \leq
  C_{\ep,\lambda,s}\|w_1-w_2\|_{L^2(0,T_0; H^s(\Omega))}\,,
\]
where $\frac4{s}>2$ since $s<2$. Hence, we infer that
\[
\|v_1-v_2\|_{L^{2}(0,T_0;H^s(\Omega))} \leq T_0^{\frac{1}{2}-\frac{s}{4}}
\|v_1-v_2\|_{L^{4/s}(0,T_0;H^s(\Omega))}, 
\]
and we can choose $T_0$ sufficiently small such that $T_0^{\frac{1}{2}-\frac{s}{4}}C_{\ep,\lambda,s}<1$. Thus,
\[ 
\|v_1-v_2\|_{L^{2}(0,T_0;H^s(\Omega))} \leq
  T_0^{\frac{1}{2}-\frac{s}{4}}C_{\ep,\lambda,s}
  \|w_1-w_2\|_{L^2(0,T_0; H^s(\Omega))}.
\]
Banach fixed point theorem ensures the existence of a unique weak solution 
$(u_\varepsilon^{\lambda},\mu_\varepsilon^\lambda)$ 
to the approximated problem 
\eqref{1_app}-\eqref{3_app} in $(0,T_0)\times \Omega$, with
\[
u_\varepsilon^\lambda\in H^1(0,T_0; L^2(\Omega))
\cap 
  L^\infty(0,T_0; H^1(\Omega))
\cap L^2(0,T_0; H^2(\Omega)),\qquad \mu_\varepsilon^\lambda \in L^2(0,T_0;H^1(\Omega)).
\]
Note that the choice of $T_0$ 
is independent of the initial time.
Moreover, since 
$u_\ep^\lambda \in H^1(0,T_0; L^2(\Omega))
\cap L^\infty(0,T_0; H^1(\Omega))$, 
then $u_\ep^\lambda$ is weakly continuous 
with values in $T_0$: this allows us to 
obtain the pointwise regularity 
$u_\ep^\lambda(T_0)\in H^1(\Omega)$.
Such regularity is then 
enough to extend the 
solution to the next subinterval 
$[T_0,2T_0]$ (see \cite{col-gil-spr-CHconv}):
using a standard patching argument in time allows 
to extend the solution to the whole interval $[0,T]$.

\subsection{Uniform estimates}
\label{s:unif_est}
In this subsection we show that there exists $\ep_0>0$ independent of $\lambda$, and such that for $\ep<\ep_0$ the approximated solutions fulfill some uniform estimates 
independently of $\lambda$ and $\ep$. In what follows we will always assume that $\lambda\in [0,1]$.

{\em Step 1.} We start by fixing $t\in [0,T]$, testing \eqref{1_app} with  $\mu_\varepsilon^{\lambda}$, \eqref{2_app} with $\partial_t u_\varepsilon^{\lambda}$, taking the difference, and 
integrating the resulting equation on $(0,t)$.
We obtain
\begin{align*}
&\int_0^t\int_\Omega|\nabla\mu_\varepsilon^\lambda(s,x)|^2\,\xd x \,\xd s
+\lambda\int_0^t\int_\Omega
|\partial_t u_\ep^\lambda(s,x)|^2\,\xd x\,\xd s
+\frac\lambda2\int_\Omega|\nabla u_\ep^\lambda(t,x)|^2\,\xd x\\
&\qquad+ E_{\ep}(u^\lambda_\varepsilon(t,\cdot))
+ \int_\Omega (\hat\gamma_\lambda+\hat\Pi)(u^\lambda_\varepsilon(t,x))\,\xd x \\
&\leq \, \int_0^t\int_\Omega\beta_\lambda(t,x) u_\varepsilon^\lambda(t,x)\cdot
\nabla\mu^\lambda_\varepsilon(t,x)\,\xd x\,\xd t
+\frac\lambda2\int_\Omega|\nabla u_{0,\ep}^\lambda(x)|^2\,\xd x\\
&\qquad+E_{\ep}(u_{0,\ep}^\lambda)+ \int_\Omega (\hat\gamma_\lambda+\hat\Pi)(u_{0,\ep}^\lambda(x))\,\xd x.
\end{align*}
Using assumption \textbf{H3},
the uniform bound \eqref{ip_u0_ep_lam2} and
as well as Young's inequality, 
we get
\begin{align}
&\notag\int_0^t\int_\Omega|\nabla\mu_\varepsilon^\lambda(s,x)|^2\,\xd x\,\xd s + E_{\ep}(u_\varepsilon^\lambda(t,\cdot))
+\lambda\int_0^t\int_\Omega
|\partial_t u_\ep^\lambda(s,x)|^2\,\xd x\,\xd s
 +\frac\lambda2\int_\Omega|\nabla u_\ep^\lambda(t,x)|^2\,\xd x\\
&\label{eq:est1-high}\quad\leq  C_\ep
+\frac12 \int_0^t \int_\Omega |\nabla \mu^\lambda_\varepsilon(t,x)|^2\, \xd x\,\xd t 
+  \frac12\int_0^t \int_{\Omega}|\beta_{\lambda}(t,x)u_\varepsilon^\lambda(t,x)|^2\,\xd x\, \xd t 
\end{align}
for every $t\in [0,T]$.

We point out that, due to the periodic boundary conditions, and the fact that $\Omega$ is the $d$-dimensional torus, we formally have
\[
\int_{\Omega}\nabla {\bf B}_{\ep}(u_{\ep}^{\lambda}(s,x))\cdot\nabla u_{\ep}^{\lambda}(s,x)\,\xd x=
\frac12\int_{\Omega}\int_{\Omega}K_{\ep}(x,y)|\nabla u_{\ep}^{\lambda}(s,x)-\nabla u_{\ep}^{\lambda}(s,y)|^2\,\xd x\,\xd y
\]
for almost every $s\in [0,T]$.
Testing \eqref{1_app} with $u_{\ep}^{\lambda}$ and \eqref{2_app} with $-\Delta u_{\ep}^{\lambda}$, by considering the difference between the two resulting equation and by integrating in the time interval $(0,t)$, from \textbf{H3} we deduce the estimate
\begin{align*}
    &\frac12\int_{\Omega}|u_{\ep}^{\lambda}(t,x)|^2\,\xd x+\lambda \int_0^t\int_{\Omega}|\Delta u_{\ep}^{\lambda}(s,x)|^2\,\xd x\,\xd s
    +\int_0^t\int_{\Omega}\gamma_{\lambda}'(u_{\ep}^{\lambda}(s,x))|\nabla u_{\ep}^{\lambda}(s,x)|^2\,\xd x\,\xd s\\
    &\qquad
    +\frac\lambda2\int_\Omega
    |\nabla u_\ep^\lambda(t,x)|^2\,\xd x
    +\int_0^t\int_{\Omega}\int_{\Omega}K_{\ep}(x,y)|\nabla u_{\ep}^{\lambda}(s,x)-\nabla u_{\ep}^{\lambda}(s,y)|^2\,\xd x\,\xd y\,\xd s\\
    &\leq \frac12\int_{\Omega}|u_{0,\ep}^\lambda(x)|^2\,\xd x+\frac12\int_0^t\int_{\Omega}|\beta_{\lambda}(s,x)u_{\ep}^{\lambda}(s,x)|^2\,\xd x\,\xd s
    +\Big(C_{\Pi}+\frac12\Big)\int_0^t\int_{\Omega}|\nabla u_{\ep}^{\lambda}(s,x)|^2\,\xd x\,\xd s\\
    &\leq \frac12\int_{\Omega}|u_{0,\ep}^{\lambda}(x)|^2\,\xd x
    +\frac12\int_0^t\int_{\Omega}|\beta_{\lambda}(s,x)u_{\ep}^{\lambda}(s,x)|^2\,\xd x\,\xd s\\
    &\qquad+\frac14\int_0^t\int_{\Omega}\int_{\Omega}K_{\ep}(x,y)|\nabla u_{\ep}^{\lambda}(s,x)-\nabla u_{\ep}^{\lambda}(s,y)|^2\,\xd x\,\xd y\,\xd s+C\int_0^t\|u_{\ep}^{\lambda}(s,\cdot)\|^2_{L^2(\Omega)}\,\xd s,
\end{align*}
where the latter inequality holds for $\ep$ smaller than a suitable constant $\ep_0$ in view of Lemma~\ref{lemma:delta-est}. Noticing that the third term in the left-hand side of the above estimate is positive owing to the monotonicity of $\gamma_{\lambda}$, by \cite[Theorem 1.1]{ponce04} we infer the bound
\begin{align}
    \notag
    &\|u_\ep^\lambda(t,\cdot)\|_{L^2(\Omega)}^2+
    \|u_{\ep}^{\lambda}\|_{L^2(0,t;H^1(\Omega))}^2
    +\int_0^t\int_{\Omega}\int_{\Omega}K_{\ep}(x,y)|\nabla u_{\ep}^{\lambda}(s,x)-\nabla u_{\ep}^{\lambda}(s,y)|^2\,\xd x\,\xd y\,\xd s\\
    &\leq C\left(\int_{\Omega}|u_{0,\ep}^\lambda(x)|^2\,\xd x+\int_0^t\int_{\Omega}|\beta_{\lambda}(s,x)u_{\ep}^{\lambda}(s,x)|^2\,\xd x\,\xd s
    +\int_0^t\|u_{\ep}^{\lambda}(s,\cdot)\|^2_{L^2(\Omega)}\,\xd s\right) \label{eq:bd-H1-temp}.
\end{align}
By the H\"older inequality we deduce the estimate
\begin{equation}
    \label{eq:beta-lambda-reg}
    \int_0^t\int_{\Omega}|\beta_{\lambda}(s,x)u_{\ep}^{\lambda}(s,x)|^2\,\xd x\,\xd s\leq 
    \int_0^t\|\beta_{\lambda}(s,\cdot)\|^2_{L^\infty(\Omega)}\|u_{\ep}^{\lambda}(s,\cdot)\|^2_{L^2(\Omega))}\,\xd s.
\end{equation}
Thus, summing \eqref{eq:est1-high}, \eqref{eq:bd-H1-temp}, and \eqref{eq:beta-lambda-reg}, recalling \textbf{H4} we obtain
\begin{equation}\label{eq:est4-high}
\begin{split}
    &\|u_\ep^\lambda(t,\cdot)\|_{L^2(\Omega)}^2+
    \|u_{\ep}^\lambda\|^2_{L^2(0,t; H^1(\Omega))} + 
    \int_0^t\int_{\Omega}|\nabla \mu_{\ep}^{\lambda}(s,x)|^2\,\xd x\,\xd s\\
    &\qquad+E_{\ep}(u_{\ep}^{\lambda}(t,\cdot))+
    \int_0^t\int_{\Omega}\int_{\Omega}K_{\ep}(x,y)|\nabla u_{\ep}^{\lambda}(s,x)-\nabla u_{\ep}^{\lambda}(s,y)|^2\,\xd x\,\xd y\,\xd s\\
    &\leq C_\ep+C\|u_{\ep}^{\lambda}\|_{L^2(0,t;L^2(\Omega))}^2
    +C\|u_{\ep}^{\lambda}\|_{L^2(0,t;H^1(\Omega))}^2
    \|\beta\|^2_{L^2(0,T;L^\infty(\Omega;\mathbb{R}^d))}.
\end{split}
\end{equation}
Recalling assumption \textbf{H4} and
applying Gronwall's lemma, from the arbitrariness of $t\in [0,T]$ we deduce that
there exists a constant $C_\ep$ such that
\begin{align}
&\|\nabla\mu_\varepsilon^\lambda \|_{L^2(0,T;L^2(\Omega))}
\leq C_\ep \label{grad_mu}, \\
&\|u_\varepsilon^\lambda\|_{L^\infty(0,T;L^2(\Omega))\cap
L^2(0,T;H^1(\Omega))}
+\lambda^{1/2}\|u^\lambda_\ep\|_{L^\infty(0,T; H^1(\Omega))\cap
L^2(0,T; H^2(\Omega))\cap
H^1(0,T;L^2(\Omega))}
\leq C_\ep,
\label{u-linfty} \\
&\left \| E_\varepsilon(u^\lambda_\ep) \right\|_{L^\infty(0,T)}+
\left\|\int_{\Omega}\int_{\Omega}K_{\ep}(x,y)|\nabla u_{\ep}^{\lambda}(\cdot,x)-\nabla u_{\ep}^{\lambda}(\cdot,y)|^2\,\xd x\,\xd y\right\|_{L^1(0,T)}
\leq C_\ep.\label{conv-linfty}
\end{align}

Testing equation \eqref{1_app} with a function $\varphi\in L^2(0,T;H^1(\Omega))$, integrating in time, and using \eqref{grad_mu}--\eqref{conv-linfty} gives
\begin{equation}
    \label{estu_t2}
    \|\partial_t u^\lambda_\ep\|_{L^2(0,T;(H^1(\Omega))^*)}\leq C_\ep.
\end{equation} 

{\em Step 2.} In order to obtain an $L^2(0,T;H^1(\Omega))$-estimate on the chemical potential $\mu_\varepsilon^{\lambda}$, we need a bound on the $L^2(0,T)$-norm of the spatial mean of $\mu_\varepsilon^\lambda$. Thanks to the symmetry of the kernel $K$, the mean of the convolution terms vanishes, i.e.
\[
  ({\bf B}_\ep(u_\varepsilon^\lambda))_\Omega=0\,.
\]
Since also $(\Delta u_\ep^\lambda)_\Omega=0$,
owing to \eqref{u-linfty} and the Lipschitz continuity of $\Pi$,
we get
\begin{equation}
    \label{eq:est-mean}
(\mu_\varepsilon^\lambda)_\Omega = 
(\partial_t u_\ep^\lambda)_\Omega+
(\gamma_\lambda(u_\varepsilon^\lambda)+
\Pi(u_\ep^\lambda))_\Omega 
\leq C_\ep+\frac1{|\Omega|}\Vert \gamma_\lambda(u_\varepsilon^\lambda)\Vert_{L^1(\Omega)}.
\end{equation}
Hence $\{(\mu_\varepsilon^\lambda)_{\Omega}\}_{\ep}$
is uniformly 
bounded  in $L^2(0,T)$ if  $\{\gamma_\lambda(u_\varepsilon^\lambda)\}_{\ep}$ is uniformly bounded
in $L^2(0,T;L^1(\Omega))$. 
We test \eqref{1_app} by 
$(-\Delta)^{-1}(u_\varepsilon-(u_{0,\ep}^\lambda)_\Omega)$ 
and \eqref{2_app} by $u_\varepsilon-(u_{0,\ep}^\lambda)_\Omega$, 
obtaining
\begin{align*}
    &\quad \underbrace{\langle\partial_t u_\varepsilon^\lambda(t), (-\Delta)^{-1}(u_\varepsilon^\lambda(t,\cdot)
    -(u_{0,\ep}^\lambda)_\Omega)\rangle_{(H^1(\Omega))^*,H^1(\Omega)}}_{=:I_1}\\
    &+\underbrace{\lambda
    \langle\partial_t u_\varepsilon^\lambda(t), u_\varepsilon^\lambda(t,\cdot)
    -(u_{0,\ep}^\lambda)_\Omega)
    \rangle_{(H^1(\Omega))^*,H^1(\Omega)}
    +\lambda\int_{\Omega}|\nabla u_{\ep}^{\lambda}(t,x)|^2\,\xd x}_{=:I_2}\\
    &+ \underbrace{\int_\Omega {\bf B}_\ep(u_\varepsilon^\lambda)(t,x)
    (u_\varepsilon^\lambda(t,x)-(u_{0,\ep}^\lambda)_\Omega)\,\xd x}_{=:I_3} \\
    &\nonumber + \underbrace{\int_\Omega (\gamma_\lambda+\Pi)(u_\varepsilon^\lambda(t,x))
    (u_\varepsilon^\lambda(t,x)-(u_{0,\ep}^\lambda)_\Omega)\,\xd x}_{=:I_4}\\
    &- \underbrace{\int_\Omega \beta_\lambda(t,x) u_\varepsilon^\lambda(t,x)\cdot \nabla(-\Delta)^{-1}(u_\varepsilon^\lambda(t,x)
    -(u_{0,\ep}^\lambda)_\Omega)\,\xd x}_{=:I_5} = 0.
\end{align*}
We proceed by estimating each integral in the left-hand side of the above equation separately.

It is readily seen that $I_1+I_2$ 
is uniformly bounded in
$L^2(0,T)$
due to \eqref{u-linfty}, \eqref{estu_t2} and \eqref{ip_u0_ep_lam2}.

Regarding $I_3$, since $({\bf B}_\ep(u_\varepsilon^\lambda))_\Omega=0$
we have that 
\[
  I_3=2E_{\ep}(u_\ep^\lambda),
\]
which is clearly bounded in $L^2(0,T)$ by \eqref{conv-linfty}.

To estimate $I_4$ we observe that in view of \eqref{ip_u0_ep}
and \eqref{ip_u0_ep_lam}
there exist constants $M_1,M_2>0$ depending only on the position of 
$(u_{0,\ep})_\Omega$ in $\operatorname{Int}D(\gamma)$, such that 
\begin{equation*}
\gamma_\lambda(u_\varepsilon^\lambda)
(u_\varepsilon^\lambda-(u_{0,\ep}^\lambda)_\Omega)
\geq M_1|\gamma_\lambda(u_\varepsilon^\lambda)|-M_2,
\end{equation*}
cf.~for example \cite[p.~984]{col-gil-spr}
and the references within, while 
\[
  \int_\Omega \Pi(u_\ep(x))(u_\varepsilon(x)-
  (u_{0,\ep}^\lambda)_\Omega)\,\xd x
\]
is bounded in $L^\infty(0,T)$ thanks to \eqref{u-linfty}.

Eventually, $I_5$ can be estimated as follows  
\begin{equation*}
\| \beta_\lambda u^\lambda_\varepsilon\cdot \nabla(-\Delta)^{-1}
(u^\lambda_\varepsilon-(u_{0,\ep}^\lambda)_\Omega) \|_{L^2(0,T;L^1(\Omega))}^2\leq \int_0^T\Vert \beta_\lambda u_\varepsilon^\lambda\Vert^2_{L^2(\Omega)}\Vert (u_\varepsilon^\lambda-(u_{0,\ep}^\lambda)_\Omega)
\Vert^2_{(H^1(\Omega))^*}\,\xd t,
\end{equation*}
where the right-hand side is bounded due to \textbf{H4} and \eqref{u-linfty}.
\medskip

Combining this information, we conclude by difference that $\{\gamma_\lambda(u_\varepsilon^\lambda)\}$
is uniformly bounded in $L^2(0,T;L^1(\Omega))$. Thus, from \eqref{grad_mu} and \eqref{eq:est-mean} we infer that
\begin{align}
\Vert\mu_\varepsilon^\lambda\Vert_{L^2(0,T;H^1(\Omega))} & \leq C_\ep \label{est_mu}.
\end{align}

{\em Step 3.} We proceed by proving that $\{\gamma_\lambda(u_{\ep}^\lambda)\}$
is uniformly bounded in $L^2(0,T;L^2(\Omega))$.

We test \eqref{2_app} with 
$\gamma_\lambda(u_\varepsilon^\lambda)$. This gives 
\begin{align*}
\begin{aligned}
    &\int_0^T\int_\Omega 
    |\gamma_\lambda(u_\varepsilon^\lambda(t,x))|^2\,\xd x\, \xd t+\lambda\int_0^T\int_{\Omega}
    \gamma_{\lambda}'(u_{\ep}^{\lambda}(t,x))
    \left(
    |\partial_t u_\ep^\lambda(t,x)|^2+
    |\nabla u_{\ep}^{\lambda}(t,x)|^2\right)
    \,\xd x\,\xd t\\
    &\qquad + \int_0^T \int_\Omega {\bf B}_\ep(u_\varepsilon^\lambda)(t,x)
    \gamma_\lambda(u_\varepsilon^\lambda(t,x))\, \xd x \, \xd t \\
    &=\int_0^T\int_\Omega (\mu_\varepsilon^\lambda(t,x) -\Pi(u_\varepsilon^\lambda(t,x)))
    \gamma_\lambda(u_\varepsilon^{\lambda}(t,x))\, \xd x \, \xd t.
\end{aligned}
\end{align*}
We observe that the second term on the left-hand side is nonnegative owing to the monotonicity of $\gamma_{\lambda}$.  Analogously, the third term on the left-hand side can be rewritten as
\[
\int_0^T\int_\Omega\int_\Omega K_\varepsilon(x,y)(u_\varepsilon^\lambda(t,x)
-u_\varepsilon^\lambda(t,y))\Bigl(
\gamma_\lambda(u_\varepsilon^\lambda(t,x))-
\gamma_\lambda(u^\lambda_\varepsilon(t,y))\Bigr)\, \xd x \, \xd y \, \xd t,
\]
which is also nonnegative due to the monotonicity of 
$\gamma_\lambda$. 
Applying Young's inequality we deduce the bound
\begin{align*}
    &\quad \int_0^T\int_\Omega (\mu_\varepsilon^\lambda(t,x) -\Pi(u_\varepsilon^\lambda(t,x)))
    \gamma_\lambda(u_\varepsilon^\lambda(t,x))\, \xd x \, \xd t\\
    &\leq \int_0^T \int_\Omega \Big[\frac{|\mu_\varepsilon^\lambda(t,x) -\Pi(u_\varepsilon^\lambda(t,x))|^2}{2}
    + \frac{|\gamma_\lambda(u_\varepsilon^\lambda(t,x))|^2}{2}\Big]\, \xd x \, \xd t,
\end{align*}
which, together with \textbf{H3}, \eqref{u-linfty} and \eqref{est_mu}, implies the following estimate
\begin{equation}
\label{est_F'}
\Vert \gamma_\lambda(u^\lambda_\varepsilon)\Vert_{L^2(0,T;L^2(\Omega))}  \leq C_\ep. 
\end{equation}

\subsection{Passage to the limit as $\lambda\searrow0$}
\label{subs:conv}
We perform here the passage to the limit as $\lambda\searrow0$, 
with $0<\ep<\ep_0$ still fixed.
In view of the uniform bounds identified in Section~\ref{s:unif_est}
and the Aubin-Lions lemma,
up to the extraction of (not relabeled) subsequences we have the following convergences:
\begin{align}
 \label{conv_strong_lam}
 u_{\ep}^\lambda&\to u_\ep\quad & \text{strongly in }L^2(0,T;L^2(\Omega))\cap 
 C^0([0,T]; (H^1(\Omega))^*)\,,\\
 u_\varepsilon^\lambda
 &\rightharpoonup u_\ep & \text{ weakly* in } L^\infty(0,T;L^2(\Omega))\cap L^2(0,T;H^1(\Omega))\,, \label{conv_u_lam}\\
 \lambda u_\ep^\lambda &\to 0
 & \text{strongly in } L^\infty(0,T; H^1(\Omega))\cap L^2(0,T; H^2(\Omega))\,,\\
 \partial_t u_\varepsilon^\lambda &\rightharpoonup \partial_t u_\ep & \text{ weakly* in } L^2(0,T;(H^1(\Omega))^\ast)\,,\label{eq:wk-dt}\\
 \mu_\varepsilon^\lambda &\rightharpoonup \mu_\ep 
 & \text{ weakly in } L^2(0,T;H^1(\Omega))\,,\label{conv_mu_lam}\\
 \gamma_\lambda(u^\lambda_\varepsilon) &\rightharpoonup \xi_\ep & \text{ weakly in } L^2(0,T;L^2(\Omega)) \label{conv_gamma_lam},
\end{align}
for some
\begin{gather*}
u_\ep\in H^1(0,T;(H^1(\Omega))^\ast) \cap L^\infty(0,T;L^2(\Omega))\cap L^2(0,T;H^1(\Omega))\,,\\
\mu_\ep \in L^2(0,T;H^1(\Omega))\,, \qquad
\xi_\ep \in L^2(0,T;L^2(\Omega))\,.
\end{gather*}

The strong convergence \eqref{conv_strong_lam}, 
the weak convergence \eqref{conv_gamma_lam}
and the strong-weak closure of the maximal monotone 
graph $\gamma$ readily implies that 
$\xi_\ep\in\gamma(u_\ep)$ almost everywhere in $(0,T)\times \Omega$. The Lipschitz continuity of $\Pi$ yields also
\begin{equation}\label{conv_pi_lam}
  \Pi(u_\ep^\lambda) \to \Pi(u_\ep)
  \qquad\text{strongly in } L^2(0,T; L^2(\Omega))\,.
\end{equation}
Furthermore, for every $\varphi\in L^2(0,T; H^2(\Omega))$ 
by the triangle inequality we have that
\begin{align*}
  &\left|\int_0^T\int_\Omega\beta_\lambda(t,x)u^\lambda_\ep(t,x)\cdot\nabla
  \varphi(t,x)\,\xd x\,\xd t-
  \int_0^T\int_\Omega\beta(t,x)u_\ep(t,x)\cdot\nabla
  \varphi(t,x)\,\xd x\,\xd t\right|\\
  &\leq 
  \int_0^T\int_\Omega|\beta_\lambda(t,x)-\beta(t,x)||u^\lambda_\ep(t,x)|
  |\nabla\varphi(t,x)|\,\xd x\,\xd t\\
  &\quad+
  \int_0^T\int_\Omega\beta(t,x)(u^\lambda_\ep(t,x)-u_\ep(t,x))\cdot\nabla
  \varphi(t,x)\,\xd x\,\xd t\,.
\end{align*}
By the H\"older inequality, the fact that 
$\beta_\lambda\to \beta$ strongly in $L^2(0,T; L^3(\Omega))$
and the embedding $H^1(\Omega)\hookrightarrow L^6(\Omega)$,
for the first term on the right-hand side we have
\begin{align*}
  &\int_0^T\int_\Omega|\beta_\lambda(t,x)-\beta(t,x)||u^\lambda_\ep(t,x)|
  |\nabla\varphi(t,x)|\,\xd x\,\xd t\\
  &\qquad\leq
  \|u_\ep^\lambda\|_{L^\infty(0,T; L^2(\Omega))}
  \|\varphi\|_{L^2(0,T; L^6(\Omega)}
  \|\beta_\lambda-\beta\|_{L^2(0,T; L^3(\Omega))} \to 0\,.
\end{align*}
For the second term on the right-hand side note that 
$\beta\cdot\nabla\varphi\in L^1(0,T; L^2(\Omega))$
thanks to assumption \textbf{H4},
the fact that $\varphi\in L^2(0,T; H^2(\Omega))$ and the 
inclusion $H^1(\Omega)\hookrightarrow L^6(\Omega)$, so that 
from \eqref{conv_u_lam}
\[
\int_0^T\int_\Omega\beta(t,x)(u^\lambda_\ep(t,x)-u_\ep(t,x))\cdot\nabla
  \varphi(t,x)\,\xd x\,\xd t \to 0\,.
\]
Hence, we conclude that 
\[
  -\operatorname{div}
  \beta_\lambda u_\ep^\lambda \rightharpoonup 
  -\operatorname{div}
  \beta u_\ep\qquad
  \text{weakly* in } L^2(0,T; (H^2(\Omega))^*)\,.
\]
From \eqref{conv_u_lam} and the fact that ${\bf B}_\ep\in\mathscr{L}
(H^1(\Omega), (H^1(\Omega))^*)$, it is readily seen that 
\[
  {\bf B}_\ep(u_\ep^\lambda)\rightharpoonup {\bf B}_\ep(u_\ep)
  \qquad\text{weakly* in } L^2(0,T; (H^1(\Omega))^*)\,.
\]
By \eqref{conv_mu_lam}--\eqref{conv_gamma_lam}
and \eqref{conv_pi_lam},
by comparison it follows that the sequence 
$({\bf B}_{\ep}(u_{\ep}^{\lambda}))_{\lambda}$ is bounded
in $L^2(0,T;L^2(\Omega))$, hence we also conclude that
${\bf B}_\ep(u_\ep)\in L^2(0,T; L^2(\Omega))$
\[
 {\bf B}_\ep(u_\ep^\lambda)\rightharpoonup {\bf B}_{\ep}(u_\ep)
 \qquad\text{weakly in } L^2(0,T; L^2(\Omega))\,.
\]

Now, passing to the limit in \eqref{1_app}--\eqref{2_app}
as $\lambda\searrow0$, we obtain, in the sense of distributions,
\[
\partial_t u_\ep
-\Delta \mu_\ep=-\divergence(\beta u_\ep)
\]
and 
\[
\mu_\ep= {\bf B}_\ep(u_\ep)
+\xi_\ep + \Pi(u_\ep)\,.
\]
Finally, the strong convergence \eqref{conv_strong_lam}
implies also that $u_\ep(0)=u_{0,\ep}$, so that 
$(u_\ep,\mu_\ep,\xi_\ep)$ is a solution to the nonlocal Cahn-Hilliard equation \eqref{eq:NLCH-ad} according to 
Definition \ref{def:wsnl-ad}. 
This completes the proof of the first assertion of Theorem \ref{th1}.

\subsection{Continuous dependence}
Let $(\beta^1,u_{0,\ep}^1)$ and 
$(\beta^2,u_{0,\ep}^2)$ satisfy \textbf{H4} and
\eqref{ip_u0_ep}, with $(u_{0,\ep}^1)_\Omega=
(u_{0,\ep}^2)_\Omega$, and let 
$(u_\ep^1, \mu_\ep^1,\xi_\ep^1)$ and $(u_\ep^2,
\mu_\ep^2, \xi_\ep^2)$ be any corresponding 
solutions to the nonlocal equation \eqref{eq1:NL}--\eqref{eq3:NL}.
Then we have
\begin{align*}
    \partial_t(u_\ep^1-u_\ep^2)-\Delta(\mu_\ep^1-\mu_\ep^2)&=-\operatorname{div}(\beta^1u_\ep^1
    -\beta^2u_\ep^2)\,,\\
    \mu_\ep^1-\mu_\ep^2&={\bf B}_\ep(u_\ep^1-u_\ep^2)
    +\xi_\ep^1-\xi_\ep^2 + \Pi(u_\ep^1)-\Pi(u_\ep^2)\,.
\end{align*}
Noting that $(u_\ep^1-u_\ep^2)_\Omega=0$ by
the assumption on the initial data, we
test the first equation by $(-\Delta)^{-1}(u_\ep^1-u_\ep^2)$,
the second by $u_\ep^1-u_\ep^2$, and take the difference:
by performing classical computations we get
\begin{align*}
    &\frac12\|(u_\ep^1-u_\ep^2)(t)\|_{(H^1(\Omega))^*}^2
    +\int_0^t E_\ep(u_\ep^1-u_\ep^2)(s)\,\xd s
    +\int_0^t\int_\Omega
    (\xi_\ep^1-\xi_\ep^2)(s,x)(u_\ep^1-u_\ep^2)(s,x)
    \,\xd x\,\xd s\\
    &=\frac12\|(u_{0,\ep}^1
    -u_{0,\ep}^2)\|_{(H^1(\Omega))^*}^2
    -\int_0^t\int_\Omega
    (\Pi(u_\ep^1)-\Pi(u_\ep^2))(s,x)(u_\ep^1-u_\ep^2)(s,x)
    \,\xd x\,\xd s\\
    &\qquad+\int_0^t\int_\Omega
    \beta^1(s,x)(u_\ep^1-u_\ep^2)(s,x)\cdot\nabla
    (-\Delta)^{-1}(u_\ep^1-u_\ep^2)(s,x)
    \,\xd x\,\xd s\\
    &\qquad+\int_0^t\int_\Omega
    (\beta^1-\beta^2)(s,x)u_\ep^2(s,x)\cdot\nabla
    (-\Delta)^{-1}(u_\ep^1-u_\ep^2)(s,x)
    \,\xd x\,\xd s\,.
\end{align*}
By the Lipschitz-continuity of $\Pi$ we have
\[
  \int_0^t\int_\Omega
    (\Pi(u_\ep^1)-\Pi(u_\ep^2))(s,x)(u_\ep^1-u_\ep^2)(s,x)
    \,\xd x\,\xd s\leq
    C\|u_\ep^1-u_\ep^2\|^2_{L^2(0,t; L^2(\Omega))}\,,
\]
while the H\"older and Young inequalities yield
\begin{align*}
    &\int_0^t\int_\Omega
    \beta^1(s,x)(u_\ep^1-u_\ep^2)(s,x)\cdot\nabla
    (-\Delta)^{-1}(u_\ep^1-u_\ep^2)(s,x)
    \,\xd x\,\xd s\\
    &\leq\|u_\ep^1-u_\ep^2\|^2_{L^2(0,t; L^2(\Omega))}
    +\int_0^t
    \|\beta^1(s,x)\|_{L^\infty(\Omega)}^2
    \|(u_\ep^1-u_\ep^2)(s)\|_{(H^1(\Omega))^*}^2
    \,\xd x\,\xd s
\end{align*}
and
\begin{align*}
    &\int_0^t\int_\Omega
    (\beta^1-\beta^2)(s,x)u_\ep^2(s,x)\cdot\nabla
    (-\Delta)^{-1}(u_\ep^1-u_\ep^2)(s,x)
    \,\xd x\,\xd s\\
    &\leq\|\beta^1-\beta^2\|^2_{L^2(0,T; L^3(\Omega))}
    +\int_0^t\|u_\ep^2(s,\cdot)\|^2_{L^6(\Omega)}
    \|(u_\ep^1-u_\ep^2)(s)\|_{(H^1(\Omega))^*}^2\,\,\xd s.
\end{align*}
The continuous-dependence
property stated in Theorem~\ref{th1} follows from
Lemma~\ref{lemma:delta-est} and the Gronwall lemma.

\section{Proof of Theorem~\ref{th2}}
\label{proof2}
In this section we perform the limit as $\ep\searrow0$.

First of all, going back to the arguments performed in the previous 
section to obtain 
estimates \eqref{grad_mu}--\eqref{est_F'},
we observe that assumptions \eqref{ip_u0_ep_unif}--\eqref{ip_u0_ep_unif2} guarantee that the  sequence of constants $(C_\ep)_\ep$ is uniformly bounded for every $\ep\in(0,\ep_0)$.
Consequently, we deduce that there exists $C>0$
such that 
\begin{align}
    &\notag\|u_\ep\|_{H^1(0,T; (H^1(\Omega))^*)
    \cap L^2(0,T; H^1(\Omega))} &\leq C\,,\\
    &\label{eq:add19}\|E_\ep(u_\ep)\|_{L^\infty(0,T)} + 
    \left\|\int_{\Omega}\int_\Omega
    K_\ep(x,y)|\nabla u_\ep(x)-\nabla u_\ep(y)|^2\,\xd x\,\xd y
    \right\|_{L^1(0,T)} &\leq C\,,\\
    &\notag\|\mu_\ep\|_{L^2(0,T; H^1(\Omega))} &\leq C\,,\\
    &\notag\|\xi_\ep\|_{L^2(0,T; L^2(\Omega))} &\leq C\,.
\end{align}
Hence, by comparison
\[
  \|{\bf B}_\ep(u_\ep)\|_{L^2(0,T; L^2(\Omega))}\leq C\,.
\]
By Aubin-Lions compactness results we infer that, up to the extraction of (not relabeled) subsequences, 
\begin{align}
 \label{strong_L2}
 u_{\ep}&\to u\quad & \text{strongly in }L^2(0,T;L^2(\Omega))
 \cap C^0([0,T]; (H^1(\Omega))^*)\,,\\
 u_\varepsilon
 &\rightharpoonup u & \text{ weakly* in } L^\infty(0,T;L^2(\Omega))\,, \\
 \partial_t u_\varepsilon &\rightharpoonup \partial_t u & \text{ weakly* in } L^2(0,T;(H^1(\Omega))^\ast)\,,\\
 \bB_\varepsilon(u_\varepsilon)&\rightharpoonup
 \eta  &\text{ weakly in } L^2(0,T;L^2(\Omega))\,,\label{conv:B_ep}\\
 \mu_\varepsilon &\rightharpoonup \mu 
 & \text{ weakly in } L^2(0,T;H^1(\Omega))\,,\\
 \xi_\ep &\rightharpoonup \xi & \text{ weakly in } L^2(0,T;L^2(\Omega))
\end{align}
for some
\begin{gather*}
    u \in H^1(0,T; (H^1(\Omega))^*)\cap L^\infty(0,T; L^2(\Omega))\,,\\
    \mu \in L^2(0,T; H^1(\Omega))\,, \qquad
    \xi,\eta \in L^2(0,T; L^2(\Omega))\,.
\end{gather*}
We proceed by showing in addition that 
\begin{equation}\label{strong_H1}
    u^\lambda_\varepsilon \to u_\varepsilon \quad\text{strongly in }
    C^0([0,T]; L^2(\Omega))\cap L^2(0,T; H^1(\Omega))\,.
\end{equation}
Indeed,
Lemma \ref{lemma:delta-est} implies that
for every $\delta>0$, there exist $C_\delta >0$ and $\varepsilon_\delta>0$ such that 
\begin{align*}
  &
  \|u_\varepsilon-u\|_{L^2(0,T;H^1(\Omega))}^2\\
  &\leq \delta\int_0^T
  \int_\Omega\int_\Omega K_\ep(x,y)|\nabla (u_\varepsilon-u)(t,x)
  -\nabla (u_\varepsilon-u)(t,y)|^2 \,\xd x\, \xd y\,\xd t
  +C_\delta
  \|u_\varepsilon-u\|_{L^2(0,T;L^2(\Omega))}^2
\end{align*}
for every $0<\varepsilon<\varepsilon_\delta$.
Thanks to \eqref{eq:add19}, we infer that 
\[
  \|u_\varepsilon-u\|_{L^2(0,T;H^1(\Omega))}^2
  \leq C\delta + C_\delta\|u_\varepsilon-u\|_{L^2(0,T;L^2(\Omega))}^2
\]
for a constant $C>0$. 
Similarly, using the second inequality in Lemma~\ref{lemma:delta-est}
and \eqref{eq:add19}, 
the same argument ensures also that
\begin{align*}
  \|u_\varepsilon-u\|_{L^\infty(0,T;L^2(\Omega))}^2
  &\leq\delta \|E_\ep(u_\ep-u)\|_{L^{\infty}(0,T)} + C_\delta\|u_\ep-u\|^2_{C^0([0,T];
  (H^1(\Omega))^*)}\\
  &\leq C\delta + C_\delta\|u_\varepsilon-u\|_{C^0([0,T];
  (H^1(\Omega))^*)}^2\,.
\end{align*}
The strong convergence \eqref{strong_H1} follows then from the 
arbitrariness of $\delta$, and from \eqref{strong_L2}.\\

From the strong convergence of $(u_\ep)_\ep$ and
the strong-weak closure of maximal monotone graphs it is 
readily seen that $\xi \in \gamma(u)$ and that 
\[
  \Pi(u_\ep) \to \Pi(u) \qquad\text{strongly in } L^2(0,T; L^2(\Omega))\,.
\]
Let us now identify the limit $\eta$ as $-\Delta u$.
As $DE_\varepsilon=\bB_\varepsilon$, we have that
\begin{align}\label{eq:convex}
E_\varepsilon(z_1)+ \langle B_\varepsilon(z_1),z_2-z_1\rangle_{(H^1(\Omega))^*,H^1(\Omega)}&\leq E_\varepsilon (z_2),
\end{align}
for all $z_1,z_2\in H^1(\Omega)$.
Hence, for all $z\in L^2(0,T; H^1(\Omega))$ we deduce that
\begin{equation}
    \label{eq:add20}
    \int_0^TE_\varepsilon(u_\varepsilon(t,\cdot))\, \xd t
    +\int_0^T\int_\Omega 
    \bB_\varepsilon(u_\varepsilon(t,x))
    (z(t,x)-u_\varepsilon(t,x))\, \xd x \, \xd t\leq \int_0^T E_\varepsilon(z(t,\cdot))\, \xd t.
\end{equation}
The results in \cite{BBM}
and the dominated convergence theorem yield
\[
\int_0^T E_\varepsilon(z(t,\cdot))\, \xd t\to
\frac12\int_0^T\int_\Omega|\nabla z(x,t)|^2\xd x\,\xd t\,.
\]
Owing to the convergences~\eqref{strong_H1} and
\eqref{conv:B_ep}, we have that 
\[
\int_0^T\int_\Omega 
\bB_\varepsilon(u_\varepsilon(t,x))
(z(t,x)-u_\varepsilon(t,x))\, \xd x\, \xd t \to 
\int_0^T\int_\Omega\eta(t,x)(z(t,x)-u(t,x))\, \xd x\, \xd t.
\] 
Finally, following the exact same steps as in \cite{MRT18}, there holds
\[
\int_0^TE_\varepsilon(u_\varepsilon(t,\cdot))\,\xd t\rightarrow 
\frac12\int_0^T\int_\Omega|\nabla u(t,x)|^2\,\xd x\,\xd t.
\]

Hence, letting $\varepsilon\to0$
in \eqref{eq:add20}, we obtain the inequality 
\[
\frac12\int_0^T\int_\Omega|\nabla u(t,x)|^2\,\xd x\, \xd t
+\int_0^T\int_\Omega \eta(t,x) (z(t,x)-u(t,x))\, \xd x\, \xd t
\leq \frac12\int_0^T\int_\Omega|\nabla z(t,x)|^2\,\xd x\, \xd t
\]
for every $z\in L^2(0,T; H^1(\Omega))$, so that
$\eta=-\Delta u\in L^2(0,T;L^2(\Omega))$. By elliptic 
regularity we infer that $u\in L^2(0,T; H^2(\Omega))$.

Finally, 
H\"older's inequality, the Sobolev embedding 
$H^1(\Omega)\hookrightarrow L^6(\Omega)$, and the strong convergence \eqref{strong_H1} yield
\begin{align*}
  \|\beta u_\ep-\beta u\|_{L^2(0,T; L^2(\Omega))}
  &\leq\int_0^T\|\beta(t,\cdot)\|_{L^\infty(\Omega)}
  \|(u_\ep-u)(t,\cdot)\|_{L^2(\Omega)}\,\xd t\\
  &\leq
  \|\beta\|_{L^2(0,T; L^\infty(\Omega))}
  \|u_\ep-u\|_{L^\infty(0,T; L^2(\Omega))}\to0\,.
\end{align*}

Thus, letting $\ep\searrow0$
in Definition~\ref{def:wsnl-ad} (of solution for the nonlocal
Cahn-Hilliard) we obtain 
\[
  \partial_t u - \Delta\mu = -\operatorname{div}(\beta u)
\]
in the sense of distributions, as well as
\[
  \mu=-\Delta u + \xi + \Pi(u)\,.
\]
This implies that $u$ is a solution to the local Cahn-Hilliard equation \eqref{eq1:L}--\eqref{eq3:L}, 
and concludes the proof of Theorem \ref{th2}.

\section{Proof of Theorems~\ref{th3}--\ref{th4}}
\label{proof3}

We show first that under the additional assumption
\eqref{ip_u0_ep_reg}, the solution
$(u_\ep,\mu_\ep,\xi_\ep)$ to the nonlocal equation
is more regular. Note that here $\ep\in(0,\ep_0)$ is fixed.

The idea is to argue in a classical way, performing some
additional estimates on the approximate solutions
$(u_\ep^\lambda,\mu_\ep^\lambda)$ constructed in 
Section~\ref{proof1}. To this end, note that 
by \eqref{ip_u0_ep_reg}, the approximating 
sequence $(u_{0,\ep}^\lambda)_\lambda$ of initial data
satisfying \eqref{ip_u0_ep_lam}--\eqref{ip_u0_ep_lam2}
can be chosen with the additional property
\begin{equation}\label{ip_u0_ep_lam_reg}
  \sup_{\lambda\in(0,\lambda_0)}\left\{
  \|u_{0,\ep}^\lambda\|_{L^6(\Omega)}+\|-\lambda\Delta u_{0,\ep}^\lambda+{\bf B}_\ep(u_{0,\ep}^\lambda)
  +\gamma_\lambda(u_{0,\ep}^\lambda)+
  \Pi(u_{0,\ep}^\lambda)\|_{H^1(\Omega)}\right\}<+\infty\,.
\end{equation}

First of all
we need some preparatory work. Note that
the elliptic problem corresponding to 
\eqref{1_app}--\eqref{3_app} at time $0$, i.e.
\[
  \begin{cases}
  u_{0,\ep,\lambda}' - \Delta\mu_{0,\ep,\lambda} = 
  -\operatorname{div}(\beta(0)u_{0,\ep}^\lambda)\,,\\
  \mu_{0,\ep,\lambda}=
  -\lambda\Delta u_{0,\ep}^\lambda
  +{\bf B}_\ep(u_{0,\ep}^\lambda)
  +\gamma_\lambda(u_{0,\ep}^\lambda) + \Pi(u_{0,\ep}^\lambda)\,,
  \end{cases}
\]
admits a unique solution
$(u_{0,\ep,\lambda}', \mu_{0,\ep,\lambda})\in (H^1(\Omega))^*\times
H^1(\Omega)$. Testing the first equation by $\mu_{0,\ep,\lambda}$,
the second by $u_{0,\ep,\lambda}'$ and taking the difference yields
\begin{align*}
  \int_\Omega|\nabla \mu_{0,\ep,\lambda}(x)|^2\,\xd x&=
  -\langle u_{0,\ep,\lambda}',
  -\lambda\Delta u_{0,\ep}^\lambda
  +{\bf B}_\ep(u_{0,\ep}^\lambda)
  +\gamma_\lambda(u_{0,\ep}^\lambda) 
  + \Pi(u_{0,\ep}^\lambda)\rangle_{(H^1(\Omega))^*,H^1(\Omega)}\\
  &+ \int_{\Omega}\beta(0,x)u_{0,\ep}^\lambda(x)\cdot
  \nabla\mu_{0,\ep,\lambda}(x)\,\xd x.
\end{align*}
From the first equation it is readily seen that 
\[
  \|u_{0,\ep,\lambda}'\|_{(H^1(\Omega))^*}\leq
  \|\nabla\mu_{0,\ep,\lambda}\|_{L^2(\Omega)}
  +\|\beta(0)u_{0,\ep}^\lambda\|_{L^2(\Omega)}
\]
with
\[
  \|\beta(0)u_{0,\ep}^\lambda\|_{L^2(\Omega)}
  \leq\|\beta(0)\|_{L^3(\Omega)}\|u_{0,\ep}^\lambda\|_{L^6(\Omega)}
  \leq C\|\beta\|_{H^1(0,T;L^3(\Omega))}
  \|u_{0,\ep}^\lambda\|_{L^6(\Omega)}\,.
\]
Hence the Young inequality, \eqref{beta_reg}, \eqref{ip_u0_ep_lam}--\eqref{ip_u0_ep_lam2}, and \eqref{ip_u0_ep_lam_reg} imply that
\begin{equation}\label{est_mu0_lam}
  \|u_{0,\ep,\lambda}'\|_{(H^1(\Omega))^*}+
  \|\nabla \mu_{0,\ep,\lambda}\|_{L^2(\Omega)}
  \leq C_\ep\,.
\end{equation}

We are now ready to perform the additional estimate
on the approximated solutions. Again, we proceed formally
in order to avoid heavy notations and since 
everything can be proved rigorously through a further 
regularization on the problem.
The idea is to (formally) test the time derivative
of \eqref{1_app} by $(-\Delta)^{-1}(\partial_t u_{\ep}^\lambda)$, 
the time derivative of \eqref{2_app} by
$\partial_t u_\ep^\lambda$ and then to take the difference:
the resulting inequality is
\begin{align*}
    &\frac12\|\partial_t u_\ep^\lambda(t)\|_{(H^1(\Omega))^*}^2
    +\lambda\int_0^t\int_\Omega|\nabla\partial_t u_\ep^\lambda(s,x)|^2\,
    \xd x\,\xd s
    +\int_0^tE_\ep(\partial_t u_\ep^\lambda(s,\cdot))\,\xd s\\
    &\qquad+\int_0^t\int_\Omega\gamma_\lambda'(u_\ep^\lambda(s,x))
    |\partial_t u_\ep^\lambda(s,x)|^2\,\xd x\,\xd s
    +\int_0^t\int_\Omega\Pi'(u_\ep^\lambda(s,x))
    |\partial_t u_\ep^\lambda(s,x)|^2\,\xd x\,\xd s\\
    &=\frac12\|u_{0,\ep,\lambda}'\|_{(H^1(\Omega))^*}^2
    +\int_0^t\int_\Omega\partial_t u_\ep^\lambda(s,x)\beta(s,x)\cdot\nabla
    (-\Delta)^{-1}(\partial_t u_\ep^\lambda)(s,x)\,\xd x\,\xd s\\
    &\qquad
    +\int_0^t\int_\Omega u_\ep^\lambda(s,x)
    \partial_t\beta(s,x)\cdot\nabla
    (-\Delta)^{-1}(\partial_t u_\ep^\lambda)(s,x)\,\xd x\,\xd s.
\end{align*}

Now, note that by H\"older's inequality
and \eqref{beta_reg} we have
\begin{align*}
    &\int_0^t\int_\Omega\partial_t u_\ep^\lambda(s,x)\beta(s,x)\cdot\nabla
    (-\Delta)^{-1}(\partial_t u_\ep^\lambda)(s,x)\,\xd x\,\xd s\\
    &\qquad\leq\int_0^t\|\partial_t u_\ep^\lambda(s,\cdot)\|_{L^2(\Omega)}
    \|\beta(s,\cdot)\|_{L^\infty(\Omega)}
    \|\nabla(-\Delta)^{-1}(\partial_t u_\ep^\lambda)(s,\cdot)\|_{L^2(\Omega)}
    \,\xd s\\
    &\qquad\leq\frac12\|\partial_t u_\ep^\lambda\|^2_{L^2(0,t; L^2(\Omega))}
    +\frac12\int_0^t
    \|\beta(s,\cdot)\|_{L^\infty(\Omega)}^2
    \|\partial_t u_\ep^\lambda(s)\|_{(H^1(\Omega))^*}^2
    \,\xd s
\end{align*}
and
\begin{align*}
    &\int_0^t\int_\Omega 
    u_\ep^\lambda(s,x)\partial_t\beta(s,x)\cdot\nabla
    (-\Delta)^{-1}(\partial_t u_\ep^\lambda)(s,x)\,\xd x\,\xd s\\
    &\qquad\leq\int_0^t\|u_\ep^\lambda(s,\cdot)\|_{L^6(\Omega)}
    \|\partial_t\beta(s,\cdot)\|_{L^3(\Omega)}
    \|\nabla (-\Delta)^{-1}(\partial_t u_\ep^\lambda)\|_{L^2(\Omega)}
    \,\xd s\\
    &\qquad\leq\frac12\|u_\ep^\lambda\|^2_{L^2(0,t; H^1(\Omega))}
    +\frac12\int_0^t
    \|\partial_t \beta(s,\cdot)\|_{L^3(\Omega)}^2
    \|\partial_t u_\ep^\lambda(s)\|_{(H^1(\Omega))^*}^2\,\xd s.
\end{align*}
Thanks to Lemma~\ref{lemma:delta-est} there holds
\[
  \|\partial_t u_\ep^\lambda(s)\|_{(H^1(\Omega))^*}^2\leq\delta
  \int_0^tE_\ep(\partial_t u_\ep^\lambda)(s)\,\xd s
  +C_{\delta}\|\partial_t u_\ep^\lambda\|^2_{L^2(0,T; (H^1(\Omega))^*)}
\]
for $\delta$ sufficiently small. Hence, putting this information
together, using the Lipschitz-continuity of $\Pi$,
the monotonicity of $\gamma_\lambda$,
condition \eqref{est_mu0_lam} and 
the already proved estimates \eqref{u-linfty} and \eqref{estu_t2},
we are left with 
\begin{align*}
    &\|\partial_t u_\ep^\lambda(t)\|_{(H^1(\Omega))^*}^2
    +\int_0^tE_\ep(\partial_t u_\ep^\lambda(s,\cdot))\,\xd s\\
    &\leq C_\ep+ 
    \int_0^t\left(
    \|\beta(s,\cdot)\|_{L^\infty(\Omega)}^2
    +\|\partial_t\beta(s,\cdot)\|_{L^3(\Omega)}^2\right)
    \|\partial_t u_\ep^\lambda(s)\|_{(H^1(\Omega))^*}^2
    \,\xd s\,.
\end{align*}
Since $s\mapsto \|\beta(s,\cdot)\|_{L^\infty(\Omega)}^2$
and $s\mapsto \|\partial_t \beta(s,\cdot)\|_{L^3(\Omega)}^2$
belong to $L^1(0,T)$ due to \eqref{beta_reg} and \textbf{H4},
using the Gronwall lemma and 
recalling \cite[Theorem 1.1]{ponce} we infer that 
\begin{equation}
    \label{est_reg}
    \|\partial_tu_\ep^\lambda\|_{L^\infty(0,T; (H^1(\Omega))^*)\cap L^2(0,T; L^2(\Omega))} \leq C_\ep\,.
\end{equation}
Now, if \eqref{beta_reg2} holds,
we also have
\[
  \|\operatorname{div}(\beta u_\ep^\lambda)\|_{L^\infty(0,T; (H^1(\Omega))^*)}\leq
  \|\beta u_\ep^\lambda\|_{L^\infty(0,T; L^2(\Omega))}\leq
  \|\beta\|_{L^\infty(0,T; L^\infty(\Omega))}
  \|u_\ep^\lambda\|_{L^\infty(0,T; L^2(\Omega))}\,,
\]
yielding by \eqref{u-linfty} and by comparison in \eqref{1_app},
\begin{equation}
\label{est_reg2}
  \|\nabla\mu_\ep^\lambda\|_{L^\infty(0,T; L^2(\Omega))}\leq C_\ep\,.
\end{equation}
At this point, going back to the proof of Theorem~\ref{th1}, 
we repeat exactly the same arguments of {\em Step 2}~and~{\em Step 3}: using the additional estimates
\eqref{est_reg}--\eqref{est_reg2}, we deduce
\begin{equation}
    \label{est_reg3}
    \|\mu_\ep^\lambda\|_{L^\infty(0,T; H^1(\Omega))}+
    \|\gamma_\lambda(u_\lambda)\|_{L^\infty(0,T; L^2(\Omega))}\leq 
    C_\ep\,.
\end{equation}
Furthermore, if also \eqref{beta_reg3} holds we have
\begin{align*}
  \|\operatorname{div}(\beta u_\ep^\lambda)\|_{L^2(0,T; L^2(\Omega))}&\leq
  \|\operatorname{div}(\beta)u_\ep^\lambda\|_{L^2(0,T; L^2(\Omega))}+
  \|\beta\cdot\nabla u\|_{L^2(0,T; L^2(\Omega))}\\
  &\leq\|\operatorname{div}\beta\|_{L^\infty(0,T; L^3(\Omega))}\|u_\ep^\lambda\|_{L^2(0,T; L^6(\Omega))}\\
  &\qquad+
  \|\beta\|_{L^\infty(0,T; L^\infty(\Omega))}
  \|\nabla u_\ep^\lambda\|_{L^2(0,T; L^2(\Omega))}\,,
\end{align*}
so that from \eqref{estu_t2} and by comparison
in \eqref{1_app} we infer that 
\begin{equation}
    \label{est_reg4}
    \|\Delta\mu_\ep^\lambda\|_{L^2(0,T; L^2(\Omega))}
    \leq C_\ep\,.
\end{equation}
Hence, \eqref{est_reg}--\eqref{est_reg4} ensure
that the limit solution $(u_\ep,\mu_\ep,\xi_\ep)$
inherits the additional regularity stated in Theorem~\ref{th3}.
\\

The proof of Theorem~\ref{th4} follows now
as in Section~\ref{proof2}, noting that 
the assumption \eqref{ip_u0_ep_reg_unif}
implies that the family $(C_\ep)_\ep$
appearing in \eqref{est_reg}--\eqref{est_reg4}
is uniformly bounded in $\ep$.

\section*{Acknowledgements}

The authors are very grateful 
to the anonymous referee for the 
constructive suggestions and remarks.
E.D, H.R., and L.T. have been funded by the Austrian Science Fund (FWF) project F 65. 
The work of E.D. has been supported by the Austrian Science Fund (FWF) through projects I 4052-N32, and V 662-N32, as well as from BMBWF through the OeAD-WTZ project CZ04/2019.
L.T. acknowledges partial support from the Austrian Science Fund (FWF) project P27052.
L.S.~has been funded 
by Vienna Science and Technology Fund (WWTF) through Project MA14-009.

\bibliographystyle{abbrv}
\def\cprime{$'$}

\end{document}